\newcommand{\bbZ}{\mathbb Z}
\newcommand{\bbR}{\mathbb R}
\newcommand{\mcB}{\mathcal B}
\newcommand{\mcC}{\mathcal C}
\newcommand{\mcG}{\mathcal G}
\newcommand{\mcR}{\mathcal R}
\newcommand{\Ra}{\Rightarrow}
\newcommand{\La}{\Leftarrow}
\newcommand{\LRa}{\Leftrightarrow}
\newcommand{\xra}[1]{\xrightarrow{#1}}
\newcommand{\ra}{\rightarrow}
\newcommand{\lra}{\longrightarrow}
\newcommand{\dra}{\xymatrix@1{\ar@{-->}[r]&}}
\newcommand{\degree}[3]{|#1|_{#2\ldots #3}}
\newcommand{\Hom}{\mathop\mathrm{Hom}\nolimits}
\newcommand{\id}{\mathop\mathrm{id}\nolimits}
\newcommand{\im}{\mathop\mathrm{im}\nolimits}
\newcommand{\Sh}{\mathop\mathrm{Sh}\nolimits}
\newcommand{\sign}[1]{(-1)^{#1}\cdot}
\theoremstyle{plain}
\newtheorem{theorem}{Theorem}
\newtheorem{lemma}[theorem]{Lemma}
\newtheorem{corollary}[theorem]{Corollary}
\newtheorem{proposition}[theorem]{Proposition}
\theoremstyle{definition}
\newtheorem{definition}[theorem]{Definition}
\newtheorem{example}[theorem]{Example}
\theoremstyle{remark}
\newtheorem*{remark}{Remark}
\newtheoremstyle{tiny}
     {10pt}
     {10pt}
     {\footnotesize}
     {}
     {\itshape}
     {.}
     {.5em}
     {}
\theoremstyle{tiny}
\newtheorem*{tremark}{Remark}
\begin{document}

\author{Luk\'a\v s Vok\v r\'inek}

\address{Department of Mathematics and Statistics\\Masaryk University\\Kotl\'a\v rsk\'a 2\\611 37 Brno\\Czech Republic}

\email{koren@math.muni.cz}

\title[Constructing homotopy equivalences of $\bbZ G$-complexes]{Constructing homotopy equivalences of chain complexes of free $\bbZ G$-modules}

\keywords{chain complex, homotopy module, reduction, homotopy equivalence, transfer}

\subjclass[2010]{Primary 18G10; Secondary 68W30}

\date{December 31, 2012}

\thanks{
The research was supported by the Grant agency of the Czech republic under the grant P201/12/G028.}

\begin{abstract}
We describe a general method for algorithmic construction of $G$-equivariant chain homotopy equivalences from non-equivariant ones. As a consequence, we obtain an algorithm for computing equivariant (co)homology of Eilenberg-MacLane spaces $K(\pi,n)$, where $\pi$ is a finitely generated $\bbZ G$-module.

This result will be used in a forthcoming paper to construct equivariant Postnikov towers of simply connected spaces with free actions of a finite group $G$ and further to compute stable equivariant homotopy classes of maps between such spaces.

The methods of this paper work for modules over any non-negatively graded differential graded algebra, whose underlying graded abelian group is free with $1$ as one of the generators.
\end{abstract}

\maketitle

\section{Introduction}

\subsection*{Notation}
In this paper, $G$ will stand for a fixed finite group. Its (integral) group algebra will be denoted $\mcG=\bbZ G$. A chain complex of free abelian groups will be called a \emph{$\bbZ$-complex}. It is said to be \emph{locally finite}, if it consists of finitely generated abelian groups. Similarly, a chain complex of free $\mcG$-modules will be called a \emph{$\mcG$-complex}. A homomorphism of $\mcG$-modules or chain complexes will be also called a $\mcG$-linear map or an equivariant map.

\subsection*{Introduction}
It is well-known, that a $\bbZ$-complex is homotopy equivalent to a locally finite $\bbZ$-complex if and only if its homology groups are finitely generated. The same is true for $\mcG$-complexes equivariantly.

In this paper, we are interested in constructing such homotopy equivalences algorithmically. Namely, we present an alogrithm that, given a non-equivariant homotopy equivalence of a $\mcG$-complex $M$ with some locally finite $\bbZ$-complex, constructs a $\mcG$-linear homotopy equivalence of $M$ with a locally finite $\mcG$-complex.

Our main application is to the so-called effective algebraic topology, which studies simplicial sets (often infinite) from the effective, i.e.~algorithmic, point of view, e.g.~computes their homotopy groups. These simplicial sets are accessed via their chain complexes which, although infinite, often admit a computable homotopy equivalence with a locally finite chain complex. Introduced by Sergeraert these are called \emph{simplicial sets with effective homology} (see e.g.~\cite{Sergeraert} or \cite{polypost}). Building on this notion we proved in \cite{polypost} that all finite simply connected simplicial sets have a Postnikov tower consisting of simplicial sets with effective homology and indeed with polynomial-time homology (the running times of the algorithms involved are polynomial).

With the results of this paper, we will show in \cite{aslep}, that the same is possible for finite simply connected simplicial sets equipped with a free action of $G$. In the case $G=\bbZ/2$, this leads to the solution of the problem ``Does a simplicial complex $K$ embed into $\bbR^d$?'' in the so-called meta-stable range $\dim K\leq\tfrac{2}{3}d-1$, which was left open in~\cite{embedding}.

In this paper we will show, as a demonstration of our main theorem, that the Eilenberg-MacLane space $K(\pi,n)$, where $\pi$ is a finitely generated $\mcG$-module, can be equipped (equivariantly) with effective homology; this is at the same time an important step in the general case mentioned above. As a consequence, the equivariant homology and cohomology of $K(\pi,n)$ is algorithmically computable. This generalizes the well-known result \cite{EilenbergMacLane} of Eilenberg and MacLane to the equivariant situation.

Our methods are completely general and as such work for modules over an arbitrary differential graded algebra $\mcR$, which is non-negatively graded and whose underlying graded abelian group is free with $1$ as one of the generators. We will thus present most of our results for $\mcR$ and for the applications we will restrict to $\mcG$.

\subsection*{Outline}
The construction of the $\mcG$-linear homotopy equivalence proceeds as follows. Starting from a non-equivariant homotopy equivalence $M\simeq N$ of a $\mcG$-complex $M$ with a $\bbZ$-complex $N$, we put on $N$ the structure of an ``up to homotopy'' $\mcG$-complex. This structure is analogous to the $A_\infty$-structure living on a chain complex homotopy equivalent to a dga --- the $\mcG$-complexes are dg-modules for a dga $\mcG$ concentrated in dimension $0$ and similarly the up to homotopy version is governed by a dga which we call $\mcG_\infty$ (not concentrated in dimension $0$ anymore). The chain maps $M\to N$ and $N\to M$ are not $\mcG_\infty$-linear however. They are some relaxed versions, which we call $\mcG_\infty$-chain maps. The homotopies are not even $\mcG_\infty$-maps!

There is a way of strictifying $\mcG_\infty$-maps by passing to certain ``cofibrant replacements'' $BM$ and $BN$ of the $\mcG_\infty$-modules $M$ and $N$ (it is a perturbation of the usual bar construction) while the new chain homotopies for these strictifications have to be constructed essentially from scratch. In the end we replace the homotopy equivalence $M\simeq N$ by a $\mcG$-linear homotopy equivalence $BM\simeq BN$. The last step is to construct a $\mcG$-linear homotopy equivalence $BM\simeq M$.

We give precise statements in Section~\ref{s:statement} and proofs in Sections~\ref{s:proof_theorem}~and~\ref{s:proof_corollary}.

\section{Basic conventions}

All our chain complexes will be non-negatively graded and of the homological type, i.e.~the differential $\partial$ will be of degree $-1$. This applies also to differential graded algebras where we assume of course that $\partial$ is a graded derivation, $\partial(x\cdot y)=\partial x\cdot y+\sign{|x|}x\cdot\partial y$.

Given two chain complexes $C$ and $D$, we form their tensor product $C\otimes D$ whose degree $n$ part is $(C\otimes D)_n=\bigoplus_{p+q=n}C_p\otimes D_q$. If $f$ and $g$ are two maps, their tensor product $f\otimes g$ is defined by $(f\otimes g)(x\otimes y)=\sign{|g|\cdot|x|}fx\otimes gy$. The differential on $C\otimes D$ is then given as $\partial_\otimes=\partial\otimes\id+\id\otimes\partial$.

For two chain complexes $C$ and $D$ we form the unbounded\footnote{Alternatively one may take its truncation by throwing away all negatively graded pieces and replacing the $0$-chains by $0$-cycles, i.e.~chain maps.} chain complex $\Hom(C,D)$ with
\[\Hom(C,D)_k=\prod_{n=0}^\infty\Hom(C_n,D_{n+k}).\]
and the differential which we denote $[\partial,f]=\partial f-\sign{|f|}f\partial$ (it is a graded commutator) where $|f|$ is the degree of $f$, i.e.~$f\in\Hom(C,D)_{|f|}$. In this way $f$ is a chain map if $[\partial,f]=0$ and $h$ is a homotopy between $f$ and $g$ if $[\partial,h]=g-f$. We will also use the graded Leibniz rule\footnote{There is a version for the tensor product: $[\partial,f\otimes g]=[\partial,f]\otimes g+\sign{|f|}f\otimes[\partial,g]$. The version for the composition is then equivalent to the composition map
\[-\circ-\colon \Hom(D,E)\otimes\Hom(C,D)\ra\Hom(C,E)\]
being a chain map, i.e.~its differential being zero.}
\[[\partial,fg]=[\partial,f]g+\sign{|f|}f[\partial,g].\]
We will denote the suspension of a chain complex $C$ by $sC$. It is defined as $(sC)_n=C_{n-1}$ with differential $-\partial$. A good explaination of this sign change starts by considering the (identity) map $s\colon C\to sC$ and writing elements of $sC$ as $sx$. Postulating $s$ to be a chain map of degree $1$, the differential on $sC$ is forced to
\[\partial sx=-s\partial x.\]
A chain map $f$ of degree $k$ is alternatively a $k$-cycle of $\Hom(C,D)$ or a chain map $s^kC\ra D$ of degree $0$.

\section{Reductions and effective homological algebra}

\subsection*{Definitions}

By a \emph{reduction} (or a strong deformation retraction) $(\alpha,\beta,\eta)\colon C\Ra D$ we will understand
\begin{enumerate}[labelindent=\parindent,leftmargin=2\parindent,label=--]
\item
a pair of chain maps $\alpha\colon C\ra D$, $\beta\colon D\ra C$ of degree $0$, called the \emph{projection} and the \emph{inclusion} respectively, and a map $\eta\colon C\to C$ of degree $1$ called the \emph{homotopy operator} satisfying
\item
$\alpha\beta=\id$, $[\partial,\eta]=\id-\beta\alpha$ (i.e.~$\eta$ is a chain homotopy from $\beta\alpha$ to $\id$) and
\item
$\alpha\eta=0$, $\eta\beta=0$ and $\eta\eta=0$.
\end{enumerate}
The last three conditions will be important later\footnote{On the other hand, it is known that by replacing an arbitrary chain homotopy $\eta$ from $\beta\alpha$ to $\id$ by
\[\big((\id-\beta\alpha)\eta(\id-\beta\alpha)\big)\partial\big((\id-\beta\alpha)\eta(\id-\beta\alpha)\big)\]
the additional relations will start to hold.}.

In our case we will be interested in a special class of reductions which we call \emph{locally effective}. For those both chain complexes and all maps have to be locally effective, where:
\begin{enumerate}[labelindent=\parindent,leftmargin=2\parindent,label=--]
\item
the local effectivity of a chain complex means that one is able to represent its elements in a computer and there are algorithms provided that compute all the relevant operations --- the addition, scalar multiplication, and the differential,
\item
the local effectivity of a (not necessarily chain) map means that there are algorithms provided which compute the value on an arbitrary element.
\end{enumerate}

In the applications we will need a more general concept than that of a reduction. A \emph{strong equivalence} $C\LRa D$ is a span of reductions $C\La\widetilde C\Ra D$. Again, we will be interested in locally effective strong equivalences --- those, where both reductions are locally effective.

\subsection*{The statement of the main theorem}\label{s:statement}
We are now ready to state our main theorem.

\begin{theorem}\label{t:main_theorem}
There exists an algorithm which, given a locally effective $\mcG$-complex $M$ and a locally effective strong equivalence $M\LRa N$, constructs a $\mcG$-linear locally effective strong equivalence $M\LRa N'$. When $N$ is locally finite, so is $N'$.
\end{theorem}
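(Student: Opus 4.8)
The plan is to follow the outline sketched in the introduction, making each step precise. First I would reduce the general case of a strong equivalence $M\LRa N$ to the case of a single reduction. Since a strong equivalence is a span $M\La\widetilde M\Ra N$, and since we may compose reductions, it suffices to treat the two legs separately; moreover one leg already has $\mcG$-linear source $M$, so the crux is: given a locally effective reduction $N\Ra M$ (or $M\Ra N$) with $M$ a $\mcG$-complex, produce a $\mcG$-linear strong equivalence between $M$ and a locally effective (locally finite if $N$ is) $\mcG$-complex. Working with $\mcR$ in place of $\mcG$ throughout, the hypothesis that $\mcR$ is free as a graded abelian group with $1$ among the generators is what lets us carry the $\mcR$-action along as extra data on $N$.

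Next I would transport the $\mcR$-module structure of $M$ across the reduction to an ``up to homotopy'' structure on $N$. Concretely, using the homotopy operator $\eta$ one builds, by the standard homological perturbation / homotopy transfer recipe, a family of higher operations making $N$ into a module over the dga $\mcR_\infty$ (a perturbation of the bar resolution of $\mcR$, non-negatively graded, still free with $1$ a generator), together with $\mcR_\infty$-chain maps $M\to N$, $N\to M$ and the requisite homotopies; all of these are locally effective because $\eta$, $\alpha$, $\beta$ are and the formulas are finite sums in each degree. Then I would strictify: replace $M$ and $N$ by the cofibrant replacements $BM$, $BN$ (the perturbed bar construction), which are genuine $\mcR$-complexes, free over $\mcR$, and locally finite when the input is. The $\mcR_\infty$-maps become honest $\mcR$-linear chain maps $BM\to BN$ and $BN\to BM$, and one must construct $\mcR$-linear homotopies exhibiting these as a homotopy equivalence — this is done by the usual acyclic-models / contractibility argument for the bar construction, again effectively. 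Finally I would produce a $\mcR$-linear reduction $BM\Ra M$ (the bar construction of a strict module deformation retracts onto the module, $\mcR$-equivariantly and locally effectively), and assemble everything: $M\La BM\LRa BN\La N$ — wait, more precisely $M\Leftarrow BM$ together with the $\mcR$-linear equivalence $BM\simeq BN$ gives a $\mcG$-linear strong equivalence $M\LRa BN$ with $N'=BN$.

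The main obstacle I expect is the middle step: building the $\mcR$-linear homotopies for the strictified maps $BM\leftrightarrows BN$. Transporting the module structure and strictifying the maps are fairly mechanical once the perturbation lemma is set up, but the homotopies between $\mcG_\infty$-maps are not themselves $\mcG_\infty$-maps, so after strictification there is nothing to transport — the $\mcR$-linear chain homotopies must be built essentially from scratch, degree by degree, exploiting the freeness of $BM$ over $\mcR$ and the contractibility of the relevant mapping complexes, with care that every choice is made algorithmically and that finiteness is preserved degreewise. Keeping track of signs in the perturbed bar differential and verifying the reduction identities $\alpha\eta=0$, $\eta\beta=0$, $\eta\eta=0$ for the newly constructed operators (using the footnoted normalization trick if necessary) is the other place where the routine calculations become delicate.
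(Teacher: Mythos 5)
Your proposal follows essentially the same route as the paper: transfer the $\mcG$-module structure of $M$ across the (legs of the) strong equivalence to a $\mcG_\infty$-structure, strictify via the perturbed bar construction to get $\mcG$-linear chain maps between $BM$ (resp.\ $B\widetilde M$) and $BN$, rebuild the homotopy operator from scratch using freeness over $\mcG$ and algorithmic acyclicity of the kernel (the paper's Lemma~\ref{l:weak_reduction}), and finish with the $\mcG$-linear reduction $B(\mcG,\mcG,M)\Ra M$, taking $N'=BN$. You also correctly single out the genuine difficulty --- that the homotopies do not transport and must be reconstructed --- so the plan matches the paper's proof in both structure and substance.
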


This theorem will be used in \cite{aslep} in the following manner. We will be given an infinite, but locally effective $\mcG$-complex $M$, which we would like to compute with equivariantly. A typical example of such a complex is the chain complex of an infinite simplicial set, such as the Eilenberg-MacLane space $K(\pi,n)$, see the corollary below. By non-equivariant considerations, we will able to construct a strong equivalence of $M$ with a locally finite $\bbZ$-complex, making it possible to perform any (co)homological computations with the original complex $M$. By Theorem~\ref{t:main_theorem}, we will obtain a $\mcG$-linear strong equivalence, making it possible to perform even equivariant (co)homological computations.

\begin{corollary}\label{c:main_corollary}
There is an algorithm that, given a finitely generated $\mcG$-module $\pi$ and natural numbers $n$ and $k$, computes $H_k^G(K(\pi,n))$ and $H^k_G(K(\pi,n))$.
\end{corollary}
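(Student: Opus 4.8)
The plan is to derive the corollary from Theorem~\ref{t:main_theorem} by exhibiting, for each finitely generated $\mcG$-module $\pi$ and each $n$, a locally effective $\mcG$-complex $M$ computing the equivariant (co)homology of $K(\pi,n)$ together with a locally effective strong equivalence $M\LRa N$ with $N$ locally finite. Once this is in place, Theorem~\ref{t:main_theorem} upgrades the strong equivalence to a $\mcG$-linear one $M\LRa N'$ with $N'$ locally finite, and then a standard (non-equivariant) piece of effective homological algebra applied to the finite $\mcG$-complex $N'$ --- computing homology of a complex of finitely generated free modules is just linear algebra over $\mcG=\bbZ G$, and cohomology is obtained by dualizing --- yields $H_k^G(K(\pi,n))=H_k(N'\otimes_\mcG\bbZ)$ and $H^k_G(K(\pi,n))=H^k(\Hom_\mcG(N',\bbZ))$, finishing the proof.

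The first step is to produce the $\mcG$-complex $M$. Since $\pi$ is a finitely generated $\mcG$-module it is in particular a finitely generated abelian group, so $K(\pi,n)$ exists as a simplicial abelian group; I would take the standard (bar-type) simplicial model, e.g. $K(\pi,n)=\overline{W}^{\,n}(\pi)$ realized via the Eilenberg--MacLane construction, which is functorial in $\pi$ and hence carries a simplicial action of $G$. Its normalized chain complex $M=C_*(K(\pi,n))$ is then a $\mcG$-complex; freeness over $\mcG$ is automatic because $G$ acts freely on the nondegenerate simplices (the action on $K(\pi,n)$ is induced from a free action, or one arranges this by passing to a free resolution level --- more carefully, one should take $K(\pi,n)$ with a free $G$-action, which exists since $\pi$ is a $\bbZ G$-module, by the standard construction), and local effectivity is clear since all structure maps are explicitly computable from the group operations in $\pi$ and $G$.

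The second step is the non-equivariant strong equivalence $M\LRa N$ with $N$ locally finite. Here I would invoke the classical result of Eilenberg and MacLane \cite{EilenbergMacLane} in its effective form: the homology of $K(A,n)$ for finitely generated abelian $A$ is finitely generated in each degree, and moreover there is an explicit locally effective strong equivalence (a chain of reductions built from the bar construction and the Eilenberg--Zilber machinery, as in \cite{Sergeraert} or \cite{polypost}) between $C_*(K(A,n))$ and a locally finite complex. Forgetting the $G$-action on $M$ and applying this gives exactly the input $M\LRa N$ required by Theorem~\ref{t:main_theorem}.

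The main obstacle is the interface between the two constructions: one must make sure that the $\mcG$-complex $M$ used as input to Theorem~\ref{t:main_theorem} is literally the same chain complex (with its free $\mcG$-structure) whose underlying $\bbZ$-complex admits the effective strong equivalence of \cite{EilenbergMacLane}. This requires choosing a model of $K(\pi,n)$ carrying a free $G$-action --- for instance replacing the naive Eilenberg--MacLane space by $W G\times_G$-type or bar-construction model, or building it as the realization of a free simplicial $\mcG$-module resolving $\pi[n]$ --- and checking that the standard effective-homology reductions are insensitive to this choice. Everything else (functoriality, local effectivity of the explicit maps, the final linear-algebra computation of homology and cohomology of the finite $\mcG$-complex $N'$) is routine.
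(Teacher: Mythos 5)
Your overall architecture matches the paper's: produce a non-equivariant locally effective strong equivalence of a suitable $\mcG$-complex $M$ with a locally finite $\bbZ$-complex, feed it to Theorem~\ref{t:main_theorem}, and finish by linear algebra (Smith normal form) on the quotient of the resulting locally finite $\mcG$-complex. But there is a genuine gap at exactly the point you flag as an ``interface'' obstacle, and the specific claim you make about the naive model is false. The $G$-action on $K(\pi,n)$ induced by functoriality of the Eilenberg--MacLane construction from the $\mcG$-module structure on $\pi$ is \emph{not} free: $G$ acts by automorphisms of a simplicial abelian group, so the zero simplex in every dimension is fixed. Hence $C_*K(\pi,n)$ is not a complex of free $\mcG$-modules, so it is not an admissible input for Theorem~\ref{t:main_theorem} (whose hypothesis is a $\mcG$-complex, i.e.\ by the paper's convention a complex of \emph{free} $\mcG$-modules); moreover the strict quotient $C_*K(\pi,n)/G$ does not compute the Borel equivariant (co)homology $H^G_k(K(\pi,n))$ when the action has fixed points, so your final formulas would compute the wrong groups.

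The missing construction, which the paper supplies, is to take $M=C_*(WG\times K(\pi,n))$, where $WG$ is the locally finite contractible total space of the universal principal bundle $WG\to\overline WG$: the diagonal action on $WG\times K(\pi,n)$ is free, so $M$ is a free $\mcG$-complex, and $M/G\cong C_*(WG\times_G K(\pi,n))$ computes $H^G_*(K(\pi,n))$ by the very definition of Borel equivariant homology. The non-equivariant input for Theorem~\ref{t:main_theorem} is then assembled from the algorithmic Eilenberg--Zilber reduction $M\Ra C_*WG\otimes C_*K(\pi,n)$ composed with the effective-homology equivalence $C_*K(\pi,n)\LRa D$ that you correctly invoke; the target $C_*WG\otimes D$ is locally finite because $G$ is finite. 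You gesture at ``a $WG\times_G$-type model'' as something to be checked, but it is not a routine verification: it is the key step that simultaneously repairs freeness and identifies the quotient with the equivariant (co)homology. Your closing linear-algebra step is fine once this is in place.
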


Both statements will be proved later, Theorem~\ref{t:main_theorem} in Sections~\ref{s:proof_theorem} and Corollary~\ref{c:main_corollary} in Section~\ref{s:proof_corollary}.

\subsection*{Modules over differential graded algebras}
Let $\mcR$ be a differential graded algebra or, for short, a dga. Let $M$ be a left $\mcR$-module (more precisely differential graded $\mcR$-module), i.e.~a chain complex $M$ equipped with a chain map
\[\mcR\otimes M\to M,\]
satisfying the usual axioms of a module. The chain condition is equivalent to the Leibniz rule
\[\partial(rx)=(\partial r)x+\sign{|r|}r(\partial x)\]
for the scalar multiplication. There is then a left $\mcR$-module structure on $sM$ given by
\[r\cdot sx=\sign{|r|}s(r\cdot x).\]
It is easy to verify that the resulting $\mcR\otimes sM\ra sM$ is really a chain map (while the version with no sign fails to be). An $\mcR$-linear map $f$ of degree $k$ is a map of degree $k$ satisfying
\[r\cdot fx=\sign{|f|\cdot|r|}f(r\cdot x).\]
In particular, $s\colon M\to sM$ is $\mcR$-linear and this fact may serve as the definition of the $\mcR$-module structure on $sM$. Alternatively a map $f\colon M\to N$ of degree $k$ is $\mcR$-linear if and only if the corresponding map $f\colon s^kM\to N$ of degree $0$ is $\mcR$-linear (i.e.~commutes with the action of $\mcR$ in the non-graded sense).

\subsection*{Constructing reductions}
We will now rephrase the conditions on a (locally effective) reduction --- our version is a considerable weakening that proves useful when working equivariantly.

The chain complexes in this section will be modules over a differential graded algebra $\mcR$ and all maps will be assumed to be $\mcR$-linear but not necessarily chain maps. First, let there be given an $\mcR$-linear reduction $(\alpha,\beta,\eta)\colon C\Ra D$. Then, by $\alpha\eta=0$ and $\eta\beta=0$, one may think of $\eta$ as a map
\[\eta'\colon C/\im\beta\cong\ker\alpha\lra\ker\alpha\]
of degree $1$. As such, the condition $[\partial,\eta]=\id-\beta\alpha$ is translated into $[\partial,\eta']=\id$, i.e.~$\eta'$ is a contraction of $\ker\alpha$. On the other hand it is possible to construct a homotopy $\eta$ from any contraction $\eta'$ of $\ker\alpha$ by projecting to $\ker\alpha$ via $\id-\beta\alpha$, i.e.
\[\eta=\eta'(\id-\beta\alpha).\]
We have thus shown so far that a reduction can be specified by a chain map $\alpha$, its section $\beta$ and a contraction of $\ker\alpha$ (and we stress here that the chain complex $\ker\alpha$ does not depend on the section $\beta$).

It is well known that a chain complex of projective modules admits a contraction if and only if it is acyclic. An analogous result holds in the algorithmic setup, once we define all the required notions. We will say that a locally effective $\mcR$-module $F$ is \emph{free as a graded $\mcR$-module} if it is provided with an algorithm that expresses its elements as (unique) combinations of some fixed homogeneous basis\footnote{Typically the elements of $F$ are represented in a computer directly as such combinations.}. The basis is not required to be compatible with the differential --- in effect, $F$ is free as a graded $\mcR$-module and not as a (differential graded) $\mcR$-module. Similarly, we will say that a locally effective $\mcR$-module is \emph{projective as a graded $\mcR$-module} if it is equipped with a locally effective (i.e.~computable) retraction $\id\colon P\xra{i}F\xra{p}P$ from some $\mcR$-module $F$ that is free as a graded $\mcR$-module. Again, the maps are not assumed to be chain maps, but are required to be $\mcR$-linear.

The projective modules have the following property: whenever there is given an ``algorithmically certified'' surjection $A\to B$, i.e.~a map together with an algorithm that computes for each element of the codomain $B$ its (arbitrary) preimage, there exists an algorithm that computes a lift in any diagram
\[\xymatrix{
& A \ar@{->>}[d] \\
P \ar@{-->}[ru] \ar[r] & B
}\]
This lift is computed through the retraction. Namely, one computes a lift of the composition $F\xra{p}P\ra B$ by specifying its values on the basis using the algorithmic (set-theoretic) section and then the resulting lift $F\ra A$ is composed with the inclusion $P\xra{i}F\ra A$ to obtain a lift in the original diagram. This is of course very classical, but we wanted to point out that the same idea works, with correct definitions, also in the algorithmic setup.

We return now to the relationship between acyclicity and contractibility. Let $C$ be an $\mcR$-module, projective as a graded $\mcR$-module, which is ``algorithmically acyclic'': this means that there exists an algorithm that computes, for each cycle $z\in C$, some $c\in C$ with the property $z=\partial c$. Then one can construct a contraction $\sigma$ of $C$ recursively. For simplicity, we assume that $C$ is itself free as a graded $\mcR$-module with $\mcB_n$ the part of the basis of degree $n$. We assume that $\sigma$ is already defined on the $\mcR$-submodule $C^{(n-1)}$ generated by $\mcB_0\cup\cdots\cup\mcB_{n-1}$, and satisfies $[\partial,\sigma]=\id$. Since $\partial\mcB_n\subseteq C^{(n-1)}$, the mapping $\id-\sigma\partial$ is defined on $\mcB_n$ and we may compute a lift in
\[\xymatrix@C=30pt{
& C_{n+1} \ar@{->>}[d]^-\partial \\
\mcB_n \ar@{-->}[ru]^-\sigma \ar[r]_-{\id-\sigma\partial} & Z_n
}\]
by the algorithm for a section of $\partial$ provided by the acyclicity of $C$. We then extend $\sigma$ from $C^{(n-1)}\cup\mcB_n$ uniquely to an $\mcR$-linear map defined on $C^{(n)}$. Since both $[\partial,\sigma]=\partial\sigma+\sigma\partial$ and $\id$ are $\mcR$-linear and agree on $C^{(n-1)}\cup\mcB_n$ they agree on $C^{(n)}$. This finishes the induction. We have thus almost finished the proof of the following technical lemma.

\begin{lemma}\label{l:weak_reduction}
Let $\alpha\colon C\ra D$ be an $\mcR$-linear chain map for which the following hold.
\begin{itemize}
\item
As a graded $\mcR$-module, $C$ is free (or more generally projective).
\item
The map $\alpha\colon C\ra D$ is provided with a locally effective $\mcR$-linear section $\beta_0\colon D\ra C$ (and which needs not be a chain map).
\item
There is an algorithm that computes, for each cycle $z$ of $\ker\alpha$, a chain $\eta_0z\in\ker\alpha$ with the property $\partial\eta_0z=z$.
\end{itemize}
Then from the above data one can construct an $\mcR$-linear reduction $(\alpha,\beta,\eta)\colon C\Ra D$.
\end{lemma}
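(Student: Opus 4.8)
The plan is to reduce the statement, via the discussion preceding the lemma, to the construction of two pieces of data: a genuine chain-map section $\beta$ of $\alpha$ (the given $\beta_0$ need not be one), and a contraction $\sigma$ of the subcomplex $\ker\alpha$ which in addition satisfies $\sigma\sigma=0$. The homotopy operator is then $\eta:=\tilde\sigma(\id-\beta\alpha)$, where $\tilde\sigma:=\sigma(\id-\beta_0\alpha)$ is the extension of $\sigma$ to all of $C$ obtained by first projecting onto $\ker\alpha$ along $\im\beta_0$; that $(\alpha,\beta,\eta)$ is a reduction is exactly what the discussion above provides, once $\beta$ is a chain map and $\sigma$ is an honest contraction with square zero.

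First I would note that $\alpha$ is surjective (it has the section $\beta_0$), so $p:=\id-\beta_0\alpha$ is a locally effective $\mcR$-linear idempotent that projects $C$ onto the subcomplex $\ker\alpha$: indeed $\alpha p=0$, and $p$ restricts to the identity on $\ker\alpha$. Hence $\ker\alpha$ is a graded-$\mcR$-module retract of $C$, so it is itself projective as a graded $\mcR$-module, and it is locally effective. The third hypothesis says precisely that $\ker\alpha$ is algorithmically acyclic via $\eta_0$, so the recursive construction described above (read for $\ker\alpha$, which is projective rather than free --- the case covered by the parenthetical generalization there) yields a locally effective $\mcR$-linear $\sigma_0$ with $[\partial,\sigma_0]=\id$ on $\ker\alpha$. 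To make the square vanish I replace $\sigma_0$ by $\sigma:=\sigma_0\partial\sigma_0$: from $[\partial,\sigma_0]=\id$ and $\partial^2=0$ one gets $[\partial,\sigma]=\partial\sigma_0+\sigma_0\partial=\id$, and from the graded Leibniz rule $[\partial,\sigma_0\sigma_0]=[\partial,\sigma_0]\sigma_0-\sigma_0[\partial,\sigma_0]=0$ one gets $\sigma\sigma=\sigma_0\partial(\sigma_0\sigma_0)\partial\sigma_0=\sigma_0(\sigma_0\sigma_0)\partial^2\sigma_0=0$. This is the ``additional relations'' trick of the footnote, carried out at the level of the contraction.

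Next I correct $\beta_0$. Its failure to be a chain map is $c:=[\partial,\beta_0]=\partial\beta_0-\beta_0\partial$, an $\mcR$-linear map of degree $-1$; since $\alpha$ is a chain map and $\alpha\beta_0=\id$, we have $\alpha c=0$, so $c$ takes values in the subcomplex $\ker\alpha$, and $[\partial,c]=0$. Set $\beta:=\beta_0-\tilde\sigma c$. Then $\alpha\beta=\alpha\beta_0-\alpha\tilde\sigma c=\id$ (because $\im\tilde\sigma\subseteq\ker\alpha$ forces $\alpha\tilde\sigma=0$), and $[\partial,\beta]=c-[\partial,\tilde\sigma c]=c-[\partial,\tilde\sigma]c$; since $c$ lands in $\ker\alpha$, on which $[\partial,\tilde\sigma]=[\partial,\sigma]=\id$ (here one uses that $\ker\alpha$ is a subcomplex and $p=\id$ there), this gives $[\partial,\beta]=c-c=0$. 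Thus $\beta$ is a chain-map section of $\alpha$, and it is locally effective.

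Finally, with $\eta:=\tilde\sigma(\id-\beta\alpha)$ I would verify the five reduction axioms by the computations already implicit above: $\alpha\beta=\id$ is done; $\id-\beta\alpha$ is now an idempotent chain map, so $[\partial,\eta]=[\partial,\tilde\sigma](\id-\beta\alpha)=\id-\beta\alpha$ because $\id-\beta\alpha$ maps into $\ker\alpha$; $\alpha\eta=0$ since $\alpha\tilde\sigma=0$; $\eta\beta=\tilde\sigma(\beta-\beta\alpha\beta)=0$; and $\eta\eta=\tilde\sigma\tilde\sigma(\id-\beta\alpha)=0$, using that $\id-\beta\alpha$ restricts to the identity on $\ker\alpha\supseteq\im\tilde\sigma$ and that $\tilde\sigma\tilde\sigma=\sigma\sigma\,p=0$. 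Local effectivity of $\alpha,\beta,\eta$ is immediate, since each is assembled by composition and subtraction from $\alpha$, $\beta_0$, $\partial$, $\sigma_0$ and $\eta_0$, all computable. The only genuinely delicate points are the necessity of correcting $\beta_0$ and, above all, securing the ``redundant'' identities $\alpha\eta=\eta\beta=\eta\eta=0$ --- in particular $\eta\eta=0$, which is what forces the preliminary replacement of $\sigma_0$ by $\sigma_0\partial\sigma_0$; everything else is sign bookkeeping with the graded commutator $[\partial,-]$ and the two idempotents $\id-\beta_0\alpha$ and $\id-\beta\alpha$.
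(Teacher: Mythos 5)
Your proposal follows the paper's proof step for step: you observe that $\ker\alpha$ is a graded retract of $C$ via $\id-\beta_0\alpha$ and hence projective, you build a contraction of $\ker\alpha$ by the recursive lifting argument from $\eta_0$, and you repair $\beta_0$ by subtracting the contraction applied to $[\partial,\beta_0]$ --- your $\beta=\beta_0-\tilde\sigma[\partial,\beta_0]$ is exactly the paper's $\beta=\beta_0-\eta'[\partial,\beta_0]$, and your verifications of $\alpha\beta=\id$ and $[\partial,\beta]=0$ are the same computations.

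The one place you go beyond the paper is the side condition $\eta\eta=0$. You are right that this does not come for free: the paper's reassembly $\eta=\eta'(\id-\beta\alpha)$ gives $\eta\eta=\eta'\eta'(\id-\beta\alpha)$, so one needs $\eta'\eta'=0$ on $\ker\alpha$, which the recursive construction does not guarantee and which the paper's proof silently omits (deferring, implicitly, to the footnote after the definition of a reduction). Your fix $\sigma:=\sigma_0\partial\sigma_0$ does achieve $[\partial,\sigma]=\id$ and $\sigma\sigma=0$, but note that it sacrifices $\mcR$-linearity in the stated generality: when the dga $\mcR$ has a nontrivial differential, $\partial$ is only a derivation ($\partial(rx)=(\partial r)x+\sign{|r|}r\partial x$), not an $\mcR$-linear map, so the composite $\sigma_0\partial\sigma_0$ need not be $\mcR$-linear --- and the lemma promises an $\mcR$-linear $\eta$, which is what is needed when the lemma is later applied to $BM\Ra BN$ for a general $\mcR$. (The footnote's replacement $\bigl((\id-\beta\alpha)\eta(\id-\beta\alpha)\bigr)\partial\bigl((\id-\beta\alpha)\eta(\id-\beta\alpha)\bigr)$ suffers from the same defect, and for the paper's main application $\mcR=\mcG=\bbZ G$ concentrated in degree $0$ the issue disappears since then $\partial$ is $\mcG$-linear.) So: same route as the paper, with a commendable extra step whose implementation is only valid when $\partial_{\mcR}=0$; in full generality the side condition would have to be arranged differently, e.g.\ built into the recursion defining $\sigma$.
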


\begin{proof}
First we observe that $\ker\alpha$ is projective as a graded $\mcR$-module --- it retracts off $C$ with the projection $C\to\ker\alpha$ given by $\id-\beta_0\alpha$. Thus, by the above, one may construct a contraction $\eta'$ of $\ker\alpha$ from the algorithm $\eta_0$. The only remaining step is to construct a section $\beta$ that is a chain map. We set
\[\beta=\beta_0-\eta'[\partial,\beta_0]\]
which is well defined as $[\partial,\beta_0]$ takes values in $\ker\alpha$ by
\[\alpha[\partial,\beta_0]=[\partial,\underbrace{\alpha\beta_0}_{\id}]-\underbrace{[\partial,\alpha]}_0\beta_0=[\partial,\id]=0.\]
As $\eta'$ also takes values in $\ker\alpha$ we have $\alpha\beta=\alpha\beta_0=\id$. Finally, $\beta$ is a chain map by
\[[\partial,\beta]=[\partial,\beta_0]-\underbrace{[\partial,\eta']}_{\id}[\partial,\beta_0]+\eta'\underbrace{[\partial,[\partial,\beta_0]]}_0=0.\]
\end{proof}

\section{Bar construction}

\subsection*{A useful sign convention}
We will be using in the proceeding the following abbreviation. When $x_k$ are elements of a graded abelian group, we denote $\degree xij=|x_i|+\cdots+|x_j|$.

\subsection*{Bar construction}
Let $M$ be a left $\mcR$-module and $N$ a right $\mcR$-module and consider the following graded abelian group
\[B(\mcR,\mcR,M)=\bigoplus_{m\geq 0}\mcR\otimes(s\mcR)^{\otimes m}\otimes M\]
whose elements we write as $r_0|r_1|\cdots|r_m\otimes x$ (the bar $|$ is a shorthand for $\otimes s$) and with the differential $\partial=\partial^\otimes+\partial^\mathrm{alg}$, where $\partial^\otimes=\partial_0^\otimes+\cdots+\partial_{m+1}^\otimes$ for the operators
\begin{align*}
\partial_k^\otimes(r_0|\cdots|r_m\otimes x) & =\sign{k+\degree r0{k-1}}r_0|\cdots|\partial r_k|\cdots|r_m\otimes x \\
\partial_{m+1}^\otimes(r_0|\cdots|r_m\otimes x) & =\sign{m+\degree r0m}r_0|\cdots|r_m\otimes\partial x,
\end{align*}
with $0\leq k\leq m$ ($\partial^\otimes$ is the differential on the tensor product $\mcR\otimes(s\mcR)^{\otimes m}\otimes M$), and where $\partial^\mathrm{alg}=\partial_0^\mathrm{alg}+\cdots+\partial_{m+1}^\mathrm{alg}$ for the operators
\begin{align*}
\partial_k^\mathrm{alg}(r_0|\cdots|r_m\otimes x) & =\sign{k-1+\degree r0{k-1}}r_0|\cdots|r_{k-1}r_k|\cdots|r_m\otimes x \\
\partial_{m+1}^\mathrm{alg}(r_0|\cdots|r_m\otimes x) & =\sign{m+\degree r0{m-1}}r_0|\cdots|r_{m-1}\otimes r_mx,
\end{align*}
with $0\leq k\leq m$ (the index $\mathrm{alg}$ stands for ``algebraic'').

We remark that it is more customary to suspend $M$ too but this convention produces horrible signs later on. The reason is that the above bar construction $B(\mcR,\mcR,M)$ will codify, after perturbing its differential, the action of $\bigoplus_{m\geq 0}\mcR\otimes(s\mcR)^{\otimes m}$ (which we will make into an algebra in the next section) on $M$ rather than on $sM$.

We define the augmentation map $\varepsilon\colon B(\mcR,\mcR,M)\to M$ by $\varepsilon(r_0\otimes x)=r_0x$ and by sending all longer tensors to zero, $\varepsilon(r_0|\cdots|r_m\otimes x)=0$ for $m\geq 1$.

\begin{theorem} \label{t:resolution_of_G_modules}
Suppose that, as a graded $\mcR$-module, $M$ is free. Then the augmentation map $\varepsilon\colon B(\mcR,\mcR,M)\ra M$ is a projection of an $\mcR$-linear reduction.
\end{theorem}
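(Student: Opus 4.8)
The plan is to build the $\mcR$-linear reduction using Lemma~\ref{l:weak_reduction}: it suffices to exhibit an $\mcR$-linear (not necessarily chain) section $\beta_0$ of $\varepsilon$, to observe that $B(\mcR,\mcR,M)$ is free as a graded $\mcR$-module when $M$ is, and to produce an algorithm that, for each cycle $z$ in $\ker\varepsilon$, computes a chain $\eta_0 z\in\ker\varepsilon$ with $\partial\eta_0 z=z$. For the section, the obvious choice is $\beta_0(x)=1\otimes x$ (using that $1$ is one of the generators of $\mcR$); this is $\mcR$-linear in $x$ in the non-graded sense, hence $\mcR$-linear of degree $0$, and $\varepsilon\beta_0=\id$ because $\varepsilon(1\otimes x)=1\cdot x=x$. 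Freeness of $B(\mcR,\mcR,M)$ as a graded $\mcR$-module (via the left-most $\mcR$-factor) is immediate from the freeness of $M$ and of each $s\mcR$, since a tensor product of free graded abelian groups is free and the left $\mcR$-action is the evident one on the first factor; a homogeneous basis is given by $1|b_{i_1}|\cdots|b_{i_m}\otimes e_j$ where the $b$'s run over a basis of $\mcR$ not involving nothing — more precisely over a basis of the augmentation-type complement, but for Lemma~\ref{l:weak_reduction} plain freeness over $\mcR$ with basis $\{r_{i_1}|\cdots|r_{i_m}\otimes e_j\}$ (omitting the first factor, which carries the action) already suffices.

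The heart of the matter is the contraction of $\ker\varepsilon$, equivalently (by the discussion preceding Lemma~\ref{l:weak_reduction}) a contracting homotopy of the whole complex $B(\mcR,\mcR,M)$ relative to $M$; this is the classical ``extra degeneracy'' of the bar construction. I would define the $\mcR$-\emph{linear}(!) operator $\eta_0$ of degree $+1$ that inserts a $1$ in the bar on the left,
\[
\eta_0(r_0|r_1|\cdots|r_m\otimes x)=\pm\,1|r_0|r_1|\cdots|r_m\otimes x,
\]
with a sign dictated by the convention $|{\cdot}|=\otimes s$ (so that $r_0$ slides past a new suspension symbol; the precise sign is the one making the computation below work). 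Since the target of $\eta_0$ always begins with $1$ in the zeroth slot, $\varepsilon\eta_0=0$, so $\eta_0$ lands in $\ker\varepsilon$. The key computation is $[\partial,\eta_0]=\id-\beta_0\varepsilon$: when $\partial$ acts on $1|r_0|\cdots|r_m\otimes x$, the term $\partial_0^\otimes$ vanishes because $\partial 1=0$, the term $\partial_0^\mathrm{alg}$ multiplies $1\cdot r_0=r_0$ and returns the original element (this is the "$\id$" on longer tensors), and all remaining terms of $\partial$ commute past the inserted $1$ to cancel $\eta_0\partial$ up to sign; on the shortest tensors $r_0\otimes x$ the algebraic term produces exactly $\beta_0\varepsilon(r_0\otimes x)=1\otimes r_0x$, accounting for the $-\beta_0\varepsilon$. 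One then checks $\eta_0\eta_0=0$ trivially (two consecutive leading $1$'s, but after antisymmetrization of signs it is zero — or simply note $\eta_0$ is only needed as the input $\eta_0$ of Lemma~\ref{l:weak_reduction}, where no such relation is required). Feeding $\beta_0$ and $\eta_0$ into Lemma~\ref{l:weak_reduction} produces the desired $\mcR$-linear reduction $(\varepsilon,\beta,\eta)\colon B(\mcR,\mcR,M)\Ra M$, and local effectivity is clear since $\beta_0$ and $\eta_0$ are given by explicit finite formulas.

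The main obstacle is purely bookkeeping: getting the sign in the definition of $\eta_0$ right so that $[\partial,\eta_0]=\id-\beta_0\varepsilon$ holds on the nose, given the somewhat unusual convention in this paper that $M$ is \emph{not} suspended while the middle factors are. In particular the signs $(-1)^{k+\degree r0{k-1}}$ and $(-1)^{k-1+\degree r0{k-1}}$ in the two families of face operators must be tracked as the leading $1$ shifts every index by one; since $|1|=0$, inserting $1$ does not change any of the degree sums $\degree r0j$, so the net effect is a uniform shift $k\mapsto k+1$ in the exponents, which is exactly what is needed for the telescoping cancellation $[\partial,\eta_0]+(\text{boundary terms})=\id-\beta_0\varepsilon$. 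I would verify this cancellation on a general element in one displayed computation, grouping the $\partial^\otimes$-terms and the $\partial^\mathrm{alg}$-terms separately, and leave the remaining sign verifications to the reader.
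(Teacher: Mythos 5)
Your overall strategy is exactly the paper's: feed the augmentation $\varepsilon$, a section, and the extra degeneracy $r_0|\cdots|r_m\otimes x\mapsto 1|r_0|\cdots|r_m\otimes x$ into Lemma~\ref{l:weak_reduction}. But there is a genuine error in your section. The left $\mcR$-action on $B(\mcR,\mcR,M)$ is by multiplication on the \emph{leftmost} tensor factor, so the map $\beta_0(x)=1\otimes x$ defined on all of $M$ is \emph{not} $\mcR$-linear: $\beta_0(rx)=1\otimes rx$ while $r\cdot\beta_0(x)=r\otimes x$, and these are different elements of $\mcR\otimes M$. Since Lemma~\ref{l:weak_reduction} explicitly requires the section to be $\mcR$-linear (its proof uses $\id-\beta_0\alpha$ as an $\mcR$-linear retraction onto $\ker\alpha$, and the corrected section $\beta=\beta_0-\eta'[\partial,\beta_0]$ inherits $\mcR$-linearity from $\beta_0$), your argument does not go through as written. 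The fix is what the paper does: choose an $\mcR$-basis of $M$, set $\zeta_0(x)=1\otimes x$ only on basis elements, and extend $\mcR$-linearly (so $\zeta_0(rx)=r\otimes x$ for $x$ in the basis). This is precisely where the hypothesis that $M$ is free as a graded $\mcR$-module is used — in your write-up that hypothesis is never actually invoked for the section, which is a sign something is off.

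A smaller inaccuracy: the extra degeneracy $\eta_0$ is also not $\mcR$-linear (it sends $rr_0|\cdots\otimes x$ to $1|rr_0|\cdots\otimes x$, not to $r|r_0|\cdots\otimes x$), so your emphatic ``$\mcR$-linear(!)'' is wrong; the paper calls it a \emph{non-equivariant} contraction. This one is harmless, since Lemma~\ref{l:weak_reduction} only asks for an algorithm producing preimages of cycles in $\ker\varepsilon$ and then manufactures the $\mcR$-linear contraction itself by the recursive lifting argument. With the section repaired as above, the rest of your computation (the $\partial^\otimes$-part of $[\partial,\eta_0]$ vanishing, the algebraic part giving $\id$ on tensors of positive length and $\id-\beta_0\varepsilon$ on length zero) matches the paper's verification.
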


\begin{proof}
By Lemma~\ref{l:weak_reduction}, we need to construct a section and a non-equivariant contraction of $\ker\varepsilon$ (which is even stronger than the requested algorithm). To define a section $\zeta_0$ start with some $\mcR$-basis of $M$ and specify $\zeta_0(x)$ on a basis element $x$ by $\zeta_0(x)=1\otimes x\in BM_0$.

A non-equivariant contraction of $\ker\varepsilon$ is given by
\[\eta_0\colon r_0|\cdots|r_m\otimes x\mapsto 1|r_0|\cdots|r_m\otimes x.\]
It is obvious that $[\partial^\otimes,\eta_0]=0$ and that $[\partial^\mathrm{alg},\eta_0]z=z$ for all $z$ of length $m>0$. Let finally $z$ be of length $m=0$. Then $\partial^\mathrm{alg}\eta_0z=z+1\otimes\varepsilon z$. Thus on elements of $\ker\varepsilon$ of length $0$ we also obtain $[\partial^\mathrm{alg},\eta_0]=\partial^\mathrm{alg}\eta_0=\id$.
\end{proof}

\begin{remark}
The same is true when $M$ is merely projective as a graded $\mcR$-module.
\end{remark}

\section{Homotopy $\mcR$-modules}\label{s:homotopy_ZG_modules}

It is well known that the structure of a module over a differential graded algebra is homotopy invariant, i.e.~passes to homotopy equivalent chain complexes, when the dga in question is cofibrant (see e.g.~\cite{Markl}). In our applications, we are interested in modules over the dga $\mcG=\bbZ G$ which is not cofibrant. We will therefore be interested in its cofibrant replacement, which we will call $\mcG_\infty$. Any chain complex of $\mcG$-modules will then automatically be a $\mcG_\infty$-module and this structure will pass to all homotopy equivalent chain complexes.

Since we are interested in computations with these modules, a mere existence is not sufficient. We will therefore not need to prove that $\mcG_\infty$ is indeed a cofibrant replacement of $\mcG$ but we will concentrate on algorithms for the transfer of the structure of a $\mcG_\infty$-module. In this section we introduce more generally, for an essentially arbitrary dga $\mcR$, its replacement $\mcR_\infty$. In the next section we continue with describing the transport of the structure along homotopy equivalences (reductions).

\subsection*{The differential graded algebra $\mcR_\infty$}
Let $\mcR$ be a differential graded algebra which is free as a graded abelian group. Its basis elements will be called the generators of $\mcR$ and we assume that the unit $1$ of the algebra is one of them. We will now describe its replacement $\mcR_\infty=\Omega B\mcR$. As an associative unital graded algebra, it is generated by the graded abelian group $\bigoplus_{m\geq 0}\mcR\otimes(s\mcR)^{\otimes m}$ with simple tensors in $\mcR\otimes(s\mcR)^{\otimes m}$ denoted by $(r_0,\ldots,r_m)$; the dimension of this generator is $m+\degree r0m$. The differential is given by the formula
\begin{align}
\partial(r_0,\ldots,r_m) &=\sum_{k=0}^m\sign{k+\degree r0{k-1}}(r_0,\ldots,\partial r_k,\ldots,r_m) \tag{$\partial^\otimes$}\\
& \ \ \ +\sum_{k=1}^m\sign{k-1+\degree r0{k-1}}(r_0,\ldots,r_{k-1}r_k,\ldots,r_m) \tag{$\partial^+$}\\
& \ \ \ +\sum_{k=1}^m\sign{k+\degree r0{k-1}}(r_0,\ldots,r_{k-1})\cdot(r_k,\ldots,r_m) \tag{$\partial^-$}
\end{align}
It is easy to see that $\partial$ has degree $-1$ and is indeed a differential. We denote its first term by $\partial^\otimes$ and the remaining two by $\partial^\mathrm{alg}=\partial^++\partial^-$. The ideal of relations is generated by $(1)-1$ and by all $(r_0,\ldots,r_m)$ with at least one $r_i=1$. By an easy calculation, this ideal is closed under $\partial$ and $\mcR_\infty$ is defined as the quotient by this ideal.\footnote{By the form of the differential it is clear that $\mcR_\infty$ is a cellular dga: it is generated by $(r_0,\ldots,r_m)$ with $r_0,\ldots,r_m$ generators of $\mcR$, none of which is $1$ and may be added according to their dimension and glued by their boundary. In particular, $\mcR_\infty$ is indeed cofibrant.}

There is an alternative description in the case that $\mcR$ is augmented --- in this case $\mcR_\infty$ is, as an associative unital graded algebra, the tensor algebra of $\bigoplus_{m\geq 0}\overline\mcR\otimes(s\overline\mcR)^{\otimes m}$ where $\overline\mcR$ denotes the augmentation ideal. Since $\overline\mcR$ is a differential ideal, the above formula yields a well-defined differential on this tensor algebra.

\subsection*{The relation of $\mcR_\infty$ to $\mcR$}
There is an evident dga-map $\mcR_\infty\to\mcR$ sending $(r)$ to $r$ and the remaining generators to~$0$. It admits an obvious section $\mcR\ra\mcR_\infty$ which is only a chain map --- it does not respect the multiplication.

The algebra $\mcR_\infty$ has a natural filtration by subcomplexes $\mcR^d$ which are formed by elements of length at most $d$ where the length of a product is
\[\ell(\rho_1\cdot\cdots\cdot\rho_n)=\ell\rho_1+\cdots+\ell\rho_n\]
and the length of a generator is $\ell(r_0,\ldots,r_m)=m+1$. Clearly one has $\mcR^d\cdot\mcR^e\subseteq\mcR^{d+e}$ and $\mcR_\infty=\bigcup_d\mcR^d$.

\begin{theorem} \label{t:finite_approximation}
The map $\mcR^d\ra\mcR$ is a projection of a reduction for all $d\geq 1$.
\end{theorem}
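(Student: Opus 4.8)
The plan is to exhibit the reduction $\mcR^d \Ra \mcR$ by applying Lemma~\ref{l:weak_reduction} with $\mcR = \mcR_\infty$ playing the role of the ground dga (so ``$\mcR$-linear'' in that lemma means plain $\bbZ$-linear here) and $C = \mcR^d$, $D = \mcR$. Since $\mcR$ is free as a graded abelian group, $\mcR^d$ is too (its generators are the admissible products of length $\leq d$ of generators $(r_0,\ldots,r_m)$, none of the $r_i$ equal to $1$), so the first hypothesis is immediate. The chain map $\alpha\colon \mcR^d \to \mcR$ is the restriction of the dga-map $\mcR_\infty \to \mcR$, and its section $\beta_0\colon \mcR \to \mcR^d$ is $r \mapsto (r)$, which lands in length $1 \leq d$; this is $\bbZ$-linear and clearly $\alpha\beta_0 = \id$. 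So the only real work is the third hypothesis: an algorithm that, for each cycle $z \in \ker\alpha$, produces $\eta_0 z \in \ker\alpha$ with $\partial \eta_0 z = z$. In fact I expect to produce an honest contraction of $\ker\alpha$, not merely a section of $\partial$ on cycles.

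The contraction should come from the same formula that appears in Theorem~\ref{t:resolution_of_G_modules}, namely the operator that prepends a $1$. Concretely, for a generator define $s(r_0,\ldots,r_m) = (1,r_0,\ldots,r_m)$ (up to a sign dictated by the degree shift conventions), and extend it to products by acting only on the leftmost factor, $s(\rho_1 \cdot \rho_2 \cdots \rho_n) = s(\rho_1)\cdot \rho_2 \cdots \rho_n$ — this is a ``derivation-like'' extension but on the first slot only, so it is $\bbZ$-linear and well defined. Note $\ell(1,r_0,\ldots,r_m) = m+2$, so $s$ does not preserve $\mcR^d$; but on $\ker\alpha$, which consists of elements of length $\leq d-1$ plus length-$d$ pieces whose ``leading'' part vanishes under $\alpha$, one checks that the relevant combination stays inside $\mcR^d$. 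The key computation is $[\partial, s] = \id$ on $\ker\alpha$: the three pieces $\partial^\otimes, \partial^+, \partial^-$ of the differential in the definition of $\mcR_\infty$ interact with the prepended $1$ exactly as $\partial^\otimes$ and $\partial^\mathrm{alg}$ did in the bar construction — $[\partial^\otimes, s] = 0$, while $\partial^+$ contributes the term $(1\cdot r_0, r_1, \ldots) = (r_0,r_1,\ldots)$ giving the identity, and $\partial^-$ contributes a term involving $(1)\cdot(r_0,\ldots,r_m)$ which, after passing to the quotient where $(1) = 1$, is just $(r_0,\ldots,r_m)$ again — so one must track the signs and the factor of $2$ or cancellation carefully. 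On the image of $\alpha$, i.e.\ on $(r)$ with $r$ a generator, the analogue of the length-$0$ correction in Theorem~\ref{t:resolution_of_G_modules} appears: $\partial s(r)$ picks up an extra $(r) = \beta_0\alpha(r)$ term, which is exactly why $[\partial, s] = \id - \beta_0\alpha$ fails on all of $\mcR^d$ but holds on $\ker\alpha$.

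Once $\eta_0$ (or better, the contraction $\eta'$ of $\ker\alpha$) is in hand, Lemma~\ref{l:weak_reduction} upgrades $\beta_0$ to a chain-map section $\beta$ and produces the homotopy operator $\eta$, completing the reduction; local effectivity is automatic since all the maps $\alpha, \beta_0, s$ are given by explicit finite formulas on generators. The constraint $d \geq 1$ enters precisely because $\beta_0(r) = (r)$ has length $1$, so for $d = 0$ there would be no section. I expect the main obstacle to be bookkeeping rather than conceptual: verifying that the prepend-$1$ operator, suitably projected, actually preserves the length filtration $\mcR^d$ on $\ker\alpha$ (the $\partial^-$ term splits a generator into a product, which is where lengths could in principle grow), and checking that the sign conventions from the $(\partial^\otimes)$, $(\partial^+)$, $(\partial^-)$ formulas combine to give $[\partial, s] = \id$ on the nose on $\ker\alpha$ rather than $\pm\id$ or $\id$ plus an unwanted boundary term. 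This is the same calculation as in Theorem~\ref{t:resolution_of_G_modules} but with the additional $\partial^-$ summand, so I would set it up by comparing directly with that proof.
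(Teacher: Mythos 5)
Your overall framing---take $\alpha$ to be the restriction of the dga-map $\mcR_\infty\to\mcR$, take the section $\beta_0\colon r\mapsto(r)$, and feed a contraction of $\ker\alpha$ into Lemma~\ref{l:weak_reduction}---is sound in principle, but the contraction you propose does not exist: the ``prepend a $1$'' operator is identically zero on $\mcR_\infty$. By definition $\mcR_\infty$ is the quotient of the free algebra by the ideal generated by $(1)-1$ \emph{and by all generators $(r_0,\ldots,r_m)$ having some $r_i=1$} (equivalently, in the augmented description the slots of a generator lie in the augmentation ideal $\overline\mcR$). Hence $(1,r_0,\ldots,r_m)=0$ for every generator $(r_0,\ldots,r_m)$, so $s=0$ and $[\partial,s]=0\neq\id$. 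This is not a fixable bookkeeping issue, and the factor-of-two worry you raise resolves the wrong way: even before passing to the quotient, the two terms you hope will produce the identity cancel. Applying $\partial$ to $(1,r_0,\ldots,r_m)$, the $k=1$ term of $\partial^+$ contributes $+(1\cdot r_0,r_1,\ldots,r_m)=+(r_0,\ldots,r_m)$, while the $k=1$ term of $\partial^-$ contributes $\sign{1+|1|}(1)\cdot(r_0,\ldots,r_m)=-(r_0,\ldots,r_m)$; their sum is $0$. (This cancellation is precisely why the ideal above is closed under $\partial$.) So the analogy with Theorem~\ref{t:resolution_of_G_modules} breaks down exactly because of the extra summand $\partial^-$, which is absent from the bar construction.

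The paper's proof uses a different homotopy, organized along the length filtration rather than on $\ker\alpha$ directly. One contracts each quotient $\mcR^d/\mcR^{d-1}$ by the operator that \emph{merges} a leading length-one factor into the following factor,
\[(r)\cdot(r_0,\ldots,r_m)\cdot\rho\longmapsto\sign{|r|+1}(r,r_0,\ldots,r_m)\cdot\rho,\]
defined to be $0$ on the remaining additive generators; this preserves total length while decreasing the number of factors, and no forbidden $1$ is ever created. Extending by $0$ to $\mcR^{d-1}$ gives a homotopy $\eta_d$ between $\id$ and a map $p_d\colon\mcR^d\to\mcR^{d-1}$, and the composite
$\eta_d+\eta_{d-1}(\id-[\partial,\eta_d])+\cdots+\eta_2(\id-[\partial,\eta_3])\cdots(\id-[\partial,\eta_d])$
is a homotopy between $\id$ and the projection $p_2\cdots p_d\colon\mcR^d\to\mcR^1\cong\mcR$. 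If you want to keep your Lemma~\ref{l:weak_reduction} framing, this merging operator (not the prepend-$1$ operator) is the correct building block.
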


\begin{proof}
For $d=1$ the map is an isomorphism. The contraction $\eta_d'$ of the quotient $\mcR^d/\mcR^{d-1}$ is given by
\[(r)\cdot(r_0,\ldots,r_m)\cdot\rho\longmapsto\sign{|r|+1}(r,r_0,\ldots,r_m)\cdot\rho\]
if the first factor has length $1$ (and is not the sole factor), while the contraction is defined to be $0$ on the remaining additive generators.

One may then define a homotopy $\eta_d$ on $\mcR^d$ by extending the above to the generators of $\mcR^{d-1}$ by $0$. It is a homotopy of $\id$ with some map $p_d=\id-[\partial,\eta_d]\colon \mcR^d\ra\mcR^{d-1}$. The deformation of $\mcR^d$ is then given as
\[\eta_d+\eta_{d-1}(\id-[\partial,\eta_d])+\cdots+\eta_2(\id-[\partial,\eta_3])\cdots(\id-[\partial,\eta_d]),\]
clearly a homotopy between $\id$ and the projection $p_2\cdots p_d\colon \mcR^d\ra\mcR^1\cong\mcR$.
\end{proof}

\begin{tremark}
It is very simple to compute $p_d=\id-[\partial,\eta_d]$ directly and thus to simplify the computation of the overall contraction. The value on $(r)\cdot(r_0,\ldots,r_m)\cdot\rho$ is $(rr_0,r_1,\ldots,r_m)\cdot\rho$, the value on $(r,s)\cdot(r_0,\ldots,r_m)\cdot\rho$ is $\sign{|s|}(rs,r_0,\ldots,r_m)\cdot\rho$ and $p_d$ is zero otherwise.
\end{tremark}

\begin{corollary}\label{c:reduction_G_ZG}
The map $\mcR_\infty\ra\mcR$ is a projection of a reduction.\qed
\end{corollary}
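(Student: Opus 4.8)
The plan is to obtain the reduction $\mcR_\infty\Ra\mcR$ as a "limit" of the finite-stage reductions $\mcR^d\Ra\mcR$ constructed in Theorem~\ref{t:finite_approximation}, taking advantage of the fact that the filtration $\mcR^1\subseteq\mcR^2\subseteq\cdots$ is exhaustive and that, by the explicit formulas in Theorem~\ref{t:finite_approximation} and the following remark, the homotopy operators are \emph{eventually constant} on each fixed element. Concretely, denote by $\eta_d\colon\mcR^d\to\mcR^d$ the homotopy operators and by $p_d=\id-[\partial,\eta_d]\colon\mcR^d\to\mcR^{d-1}$ the projections from the proof of Theorem~\ref{t:finite_approximation}, and recall that the full contraction there is $H_d=\eta_d+\eta_{d-1}p_d+\cdots+\eta_2 p_3\cdots p_d$, a homotopy between $\id_{\mcR^d}$ and $p_2\cdots p_d\colon\mcR^d\to\mcR^1\cong\mcR$.

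First I would check compatibility of the stages: since $\eta_d$ is, by construction, the extension by zero of the quotient contraction $\eta_d'$ on $\mcR^d/\mcR^{d-1}$ and vanishes on the generators of $\mcR^{d-1}$, one sees from the explicit description in the remark that on a product $(r)\cdot(r_0,\dots,r_m)\cdot\rho$ lying in $\mcR^e$ with $e<d$ the operators $\eta_d$ and $p_d$ act trivially ($\eta_d=0$, $p_d=\id$) — more precisely $\eta_d|_{\mcR^{d-1}}=0$ and $p_d|_{\mcR^{d-1}}=\id_{\mcR^{d-1}}$. Hence for any fixed $\xi\in\mcR^e$ the composites $p_2\cdots p_d(\xi)$ and $H_d(\xi)$ stabilize as soon as $d\ge e$: they are independent of $d$ for $d\ge e$. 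Define $\alpha\colon\mcR_\infty\to\mcR$ by $\alpha(\xi)=p_2\cdots p_d(\xi)$ for any $d\ge\ell(\xi)$, let $\beta\colon\mcR\cong\mcR^1\hra\mcR_\infty$ be the inclusion, and let $\eta\colon\mcR_\infty\to\mcR_\infty$ be $\eta(\xi)=H_d(\xi)$ for any $d\ge\ell(\xi)$. Each of these is well-defined by the stabilization just noted, and they are computable since for a given element one simply reads off its length and applies the finite-stage formulas.

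It then remains to verify the reduction identities $\alpha\beta=\id$, $[\partial,\eta]=\id-\beta\alpha$, $\alpha\eta=0$, $\eta\beta=0$, $\eta\eta=0$. But these are all equalities of maps evaluated at a single element $\xi$; picking $d$ large enough (larger than the lengths of $\xi$ and of all terms appearing, which is controlled since $\partial$, $\eta_d$ and the $p_d$ do not increase length — indeed $\partial^-$ splits a product without changing total length, $\partial^+$ decreases it, and $\partial^\otimes$ preserves it), each identity reduces to the corresponding identity for the honest reduction $\mcR^d\Ra\mcR$ supplied by Theorem~\ref{t:finite_approximation}, which already incorporates the normalization $\alpha\eta=0$, $\eta\beta=0$, $\eta\eta=0$ (or, if one prefers, one invokes the footnoted normalization trick once at the level of $\mcR_\infty$). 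The only genuine point requiring care — and the step I expect to be the main obstacle to write cleanly — is the length-estimate bookkeeping that guarantees the stabilization is uniform across all terms produced by a bounded number of applications of $\partial$, $\eta$ and the $p_k$; once that is in place everything else is a direct transfer from the finite stages, so the corollary follows. $\qed$
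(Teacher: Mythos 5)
Your proof is correct and is essentially the argument the paper intends: the corollary carries no separate proof precisely because it follows from Theorem~\ref{t:finite_approximation} by the stabilization along the exhaustive filtration $\mcR_\infty=\bigcup_d\mcR^d$ that you spell out ($\eta_d$ vanishes on $\mcR^{d-1}$, hence $p_d$ restricts to the identity there, so the finite-stage projections and homotopies are compatible and pass to the union). The length bookkeeping you flag as the main worry is in fact immediate, since each $\mcR^d$ is a subcomplex preserved by all the operators involved, so every identity can be checked inside a single finite stage.
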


\section{Transfer of the structure}

In this section, we will describe how a structure of an $\mcR_\infty$-module is transported along a reduction. There are two directions, which we call ``easy'' and ``basic'' in accordance with the easy and basic perturbation lemmas of homological perturbation theory, see e.g.~\cite[Section 4.8]{Sergeraert}.

\subsection*{The easy case}
We assume here, that $\mcR$ is augmented\footnote{More generally, when there is given an arbitrary $\bbZ$-linear map $\varepsilon\colon\mcR_0\to\bbZ$ satisfying $\varepsilon(1)=1$ (which exists by our assumption of freeness of $\mcR_0$), we may set $\rho x=\beta\rho\alpha x$ when $\rho=(r_0,\ldots,r_m)$ with $m\geq 2$ and
\begin{align*}
(r)x & =\beta(r)\alpha x+\varepsilon(r)\cdot(\id-\beta\alpha)x+\varepsilon(\partial r)\cdot\eta x, \\
(r,s)x & =\beta(r,s)\alpha x+(\varepsilon(rs)-\varepsilon(r)\cdot\varepsilon(s))\cdot\eta x.
\end{align*}}. Let $(\alpha,\beta,\eta)\colon M\Ra N$ be a reduction and let $N$ be equipped with a structure of an $\mcR_\infty$-module. Then we define an $\mcR_\infty$-module structure on $M$ by $\rho x=\beta\rho\alpha x$, whenever $\rho=(r_0,\ldots,r_m)\in\mcR_\infty$ with all $r_i$ in the augmentation ideal. Since the augmentation ideal is closed under $\partial$, the Leibniz rule
\[\partial(\rho x)=\beta(\partial\rho)\alpha x+\sign{|\rho|}\beta\rho\alpha(\partial x)=(\partial\rho)x+\sign{|\rho|}\rho(\partial x)\]
holds for $\rho$. When some $r_i$ is a multiple of $1$, the action is given by the axioms of an $\mcR_\infty$-module and the Leibniz rule is automatically satisfied for such $\rho$. Therefore $M$ is indeed an $\mcR_\infty$-module.

All the maps $\alpha$, $\beta$ and $\eta$ are $\mcR_\infty$-linear and thus $M\Ra N$ is in fact an $\mcR_\infty$-linear reduction. We will explain in Section~\ref{s:R_linear_maps} how to replace this reduction by an $\mcR$-linear one in a more general context which applies also to the transfer in the opposite direction.

\subsection*{The basic case}
Let $(\alpha,\beta,\eta)\colon M\Ra N$ be a reduction and let $M$ be equipped with a structure of an $\mcR_\infty$-module. We first define the following family of maps $M\ra M$
\[\Sh(r_0,\ldots,r_m)x=\sum_{\makebox[30pt]{$\scriptstyle\substack{n\geq 0,\\ 0<k_1<\cdots<k_n<m+1}$}}(r_0,\ldots,r_{k_1-1})\eta\cdots\eta(r_{k_n},\ldots,r_m)x\]
(the shuffles of $(r_0,\ldots,r_m)$ and $\eta$). The corresponding family of maps $N\ra N$ is given by
\[(r_0,\ldots,r_m)y=\alpha\Sh(r_0,\ldots,r_m)\beta y.\]
The following is the main result of this section.

\begin{theorem}
The above prescription defines an action of $\mcR_\infty$ on $N$.
\end{theorem}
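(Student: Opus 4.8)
The plan is to show that the maps $\rho y=\alpha\Sh(\rho)\beta y$ assemble into a dga-map $\mcR_\infty\to\Hom(N,N)$, equivalently that they satisfy the Leibniz rule with respect to $\partial$ (so that the action is a chain map) and are multiplicative (so that the action respects the product structure of $\mcR_\infty$). Since $\mcR_\infty$ is a free associative algebra on the generators $(r_0,\ldots,r_m)$ modulo the ideal generated by $(1)-1$ and by generators containing a $1$, it suffices to verify these two properties on the generators, together with the observation that $\Sh$ vanishes (and hence the action is the obvious one) when one of the $r_i$ equals $1$: indeed, $\eta$ is ``normalized'' in the sense that $\eta\beta=0$ and $\alpha\eta=0$, so any shuffle term with $(\ldots,1,\ldots)$ sitting strictly between two $\eta$'s contributes $(\ldots)\eta\,1\,\eta(\ldots)=0$ once we also use $\eta^2=0$ for adjacent insertions, and the boundary terms collapse using $\alpha\beta=\id$. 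That check is what makes the action descend to the quotient defining $\mcR_\infty$.

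The computational heart is the Leibniz rule on a generator, namely
\[[\partial,\alpha\Sh(r_0,\ldots,r_m)\beta]=\alpha\Sh\big(\partial(r_0,\ldots,r_m)\big)\beta,\]
where on the right $\partial(r_0,\ldots,r_m)$ is expanded via $\partial^\otimes+\partial^++\partial^-$ and $\Sh$ is extended to products by the rule that $\Sh$ of a product is the composite of the $\Sh$'s separated by $\eta$'s (this is exactly how the $\partial^-$-terms, which produce genuine products in $\mcR_\infty$, get interpreted). First I would note that $\alpha$ and $\beta$ are chain maps, so $[\partial,\alpha\Sh(\rho)\beta]=\alpha[\partial,\Sh(\rho)]\beta$, reducing everything to an identity for the maps $M\to M$:
\[[\partial,\Sh(r_0,\ldots,r_m)]=\Sh\big(\partial^\otimes(r_0,\ldots,r_m)\big)+\Sh\big(\partial^+(r_0,\ldots,r_m)\big)+\Sh\big(\partial^-(r_0,\ldots,r_m)\big).\]
Here one uses that $M$ is a genuine $\mcR_\infty$-module, so $[\partial,(r_0,\ldots,r_m)]$ acting on $M$ already equals $(\partial^\otimes+\partial^++\partial^-)(r_0,\ldots,r_m)$ acting on $M$; and one uses the homotopy identity $[\partial,\eta]=\id-\beta\alpha$. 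Expanding $[\partial,-]$ over the shuffle sum by the graded Leibniz rule, each term either hits one of the inserted factors $(r_{k_{i-1}},\ldots,r_{k_i-1})$ — producing the $\partial^\otimes$, $\partial^+$, $\partial^-$ contributions inside a block — or hits one of the $\eta$'s, replacing it by $\id-\beta\alpha$. The $\id$-part of $\id-\beta\alpha$ merges two adjacent blocks into one longer generator with no $\eta$ between them, which is precisely a term in $\Sh(\partial^+(\ldots))$ coming from concatenating $r_{k_i-1}$ with $r_{k_i}$; and the $-\beta\alpha$-part splits the shuffle into $\Sh(\ldots)\beta\cdot\alpha\Sh(\ldots)$, which after reinterpretation is a $\partial^-$-term (a product in $\mcR_\infty$). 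The whole verification is a sign bookkeeping exercise: one must match the signs $(-1)^{k+\degree r0{k-1}}$, $(-1)^{k-1+\degree r0{k-1}}$, $(-1)^{k+\degree r0{k-1}}$ appearing in $\partial^\otimes$, $\partial^+$, $\partial^-$ against the Koszul signs generated by moving $\partial$ past the blocks and the $\eta$'s in the shuffle, where each bar $|$ carries a suspension and $\eta$ has degree $1$.

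Finally, multiplicativity $\alpha\Sh(\rho)\beta\cdot\alpha\Sh(\rho')\beta=\alpha\Sh(\rho\cdot\rho')\beta$ is not literally true on the nose — rather, $\Sh(\rho\cdot\rho')=\Sh(\rho)\,\eta\,\Sh(\rho')$ by definition of the extension of $\Sh$ to products, and the composite $\alpha\Sh(\rho)\beta\alpha\Sh(\rho')\beta$ differs from $\alpha\Sh(\rho)\eta\Sh(\rho')\beta$ by the term with $\beta\alpha$ in place of $\eta$; but the difference $\Sh(\rho)(\id-\beta\alpha)\Sh(\rho')=\Sh(\rho)[\partial,\eta]\Sh(\rho')$ is absorbed when one checks that the action is well-defined as a dga-map on $\mcR_\infty$ built as a quotient — more precisely, one shows directly that $y\mapsto(r_0,\ldots,r_m)y$ defines an algebra homomorphism from the \emph{free} algebra on the generators into $\Hom(N,N)$ whose restriction to the defining ideal is zero, using the normalization remarks above, so it factors through $\mcR_\infty$; combined with the Leibniz rule this gives a dga-map $\mcR_\infty\to\Hom(N,N)$, i.e.\ an $\mcR_\infty$-module structure on $N$. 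The main obstacle, as is typical for bar-type constructions, is organizing the signs so that the three families of terms on the right-hand side of the Leibniz identity appear with exactly the coefficients dictated by $\partial^\otimes$, $\partial^+$, $\partial^-$; I expect to handle this by working with the suspension-free convention the paper has carefully set up (so that $B(\mcR,\mcR,M)$ codifies the action on $M$ rather than $sM$) and tracking the degree of $\eta$ as a single sign $(-1)$ per inserted copy.
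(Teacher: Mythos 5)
Your overall strategy is the paper's: everything reduces to a formula for $[\partial,\Sh(r_0,\ldots,r_m)]$ as an operator on $M$ (the paper's Lemma~\ref{l:differential_Sh}), which is then conjugated by the chain maps $\alpha$ and $\beta$; and your observation that the unit relations of $\mcR_\infty$ are respected because of $\alpha\eta=0$, $\eta\beta=0$, $\eta\eta=0$ is correct and worth making explicit. The gap is in the key identity itself. You decree that $\Sh$ of a product is ``the composite of the $\Sh$'s separated by $\eta$'s'' and assert $[\partial,\Sh(\rho)]=\Sh(\partial^\otimes\rho)+\Sh(\partial^+\rho)+\Sh(\partial^-\rho)$, so that the $\partial^-$-terms on the right read $\pm\Sh(r_0,\ldots,r_{k-1})\,\eta\,\Sh(r_k,\ldots,r_m)$. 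That identity is false; the correct separator is $\beta\alpha$:
\begin{align*}
[\partial,\Sh(r_0,\ldots,r_m)]&=\Sh\partial^\otimes(r_0,\ldots,r_m)+\sum_{k=1}^m\sign{k-1+\degree r0{k-1}}\Sh(r_0,\ldots,r_{k-1}r_k,\ldots,r_m)\\
&\ \ \ +\sum_{k=1}^m\sign{k+\degree r0{k-1}}\Sh(r_0,\ldots,r_{k-1})\,\beta\alpha\,\Sh(r_k,\ldots,r_m).
\end{align*}
(Check $m=1$: differentiating $(r_0)\eta(r_1)$ contributes $\sign{|r_0|}(r_0)(\id-\beta\alpha)(r_1)$; the $\id$-part cancels against the $\partial^-$-part of $\partial(r_0,r_1)$ acting on $M$, leaving $-\beta\alpha$, not $\eta$.) This matters: only the $\beta\alpha$-version, after conjugation, produces $\alpha\Sh(r_0,\ldots,r_{k-1})\beta\cdot\alpha\Sh(r_k,\ldots,r_m)\beta$, which is exactly the action of the product $(r_0,\ldots,r_{k-1})\cdot(r_k,\ldots,r_m)$ on $N$. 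With $\eta$ in the middle the Leibniz rule for the $N$-action does not come out, and there is no separate ``multiplicativity'' to verify, since the action of a product in the free algebra is defined to be the composite.

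Two further points reflect the same confusion. First, the $\id$-part of $[\partial,\eta]=\id-\beta\alpha$ does not ``merge two adjacent blocks into one longer generator'' and is not a $\partial^+$-contribution; the concatenations $r_{k-1}r_k$ arise from the $\partial^+$-part of $[\partial,(r_i,\ldots,r_j)]$ applied to the individual blocks, while the $\id$-part of $[\partial,\eta]$ yields composites $\Sh(r_0,\ldots,r_{k-1})\Sh(r_k,\ldots,r_m)$ that cancel against the $\partial^-$-parts of those block differentials. Second, the closing ``absorption'' argument is invalid: the discrepancy between your two candidate right-hand terms is $\alpha\Sh(\rho)(\beta\alpha-\eta)\Sh(\rho')\beta$ (not $\id-\beta\alpha$, as you compute), it is a nonzero operator in general, and it bears no relation to the ideal generated by $(1)-1$ and the degenerate generators, so it cannot be disposed of by factoring through the quotient. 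Once the identity is stated with $\beta\alpha$, the theorem follows in one line and no patch is needed.
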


\begin{proof}
Lemma~\ref{l:differential_Sh} below gives a formula for the differential of $\Sh(r_0,\ldots,r_m)$. Decorating it with chain maps $\alpha$ and $\beta$ (i.e.~$[\partial,\alpha]=0$ and $[\partial,\beta]=0$) the result is easily obtained.
\end{proof}

\begin{lemma}\label{l:differential_Sh}
The differential $[\partial,\Sh(r_0,\ldots,r_m)]$ equals
\begin{align*}
\Sh\partial^\otimes(r_0,\ldots,r_m) & +\sum_{k=1}^m\sign{k-1+\degree r0{k-1}}\Sh(r_0\ldots,r_{k-1}r_k,\ldots,r_m) \\
& +\sum_{k=1}^m\sign{k+\degree r0{k-1}}\Sh(r_0,\ldots,r_{k-1})\beta\alpha\Sh(r_k,\ldots,r_m).
\end{align*}
\end{lemma}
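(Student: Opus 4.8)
The plan is to compute $[\partial,\Sh(r_0,\ldots,r_m)]$ directly from the definition of $\Sh$ as a sum over shuffles of the bar symbols with copies of the homotopy operator $\eta$, using the graded Leibniz rule together with the three basic identities available for a reduction, namely $[\partial,\eta]=\id-\beta\alpha$, $\eta\eta=0$, $\alpha\eta=0$ and $\eta\beta=0$. The differential $\partial$ acting on a term $(r_0,\ldots,r_{k_1-1})\eta\cdots\eta(r_{k_n},\ldots,r_m)$ distributes (with appropriate Koszul signs tracked by the $\degree{\cdot}{\cdot}{\cdot}$ convention) over each tensor factor $(r_{k_i},\ldots,r_{k_{i+1}-1})$ — where it acts as the internal differential of $\mcR_\infty$, i.e.\ via $\partial^\otimes$, $\partial^+$ and $\partial^-$ restricted to that block — and over each interspersed $\eta$, where $[\partial,\eta]=\id-\beta\alpha$.

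The key organizational point is to see which contributions survive and which cancel. First I would argue that all $\partial^\otimes$ contributions from the individual blocks reassemble into the single term $\Sh\partial^\otimes(r_0,\ldots,r_m)$, since $\partial^\otimes$ on $\mcR_\infty$ is just the tensor-product differential and $\Sh$ is linear in its argument. Next, the $\partial^+$ contributions (internal multiplications $r_{k-1}r_k$ within a single block) likewise reassemble into $\sum_k\pm\,\Sh(r_0,\ldots,r_{k-1}r_k,\ldots,r_m)$. The interesting cancellation is between two sources: the $\partial^-$ terms, which \emph{split} a block $(r_{k_i},\ldots,r_{k_{i+1}-1})$ into a product $(r_{k_i},\ldots,r_{j-1})\cdot(r_j,\ldots,r_{k_{i+1}-1})$ — note that in $\mcR_\infty$ this product is just juxtaposition, so after splitting there is \emph{no} $\eta$ between the two new blocks — and the ``$\id$'' part of $[\partial,\eta]=\id-\beta\alpha$, which \emph{removes} an $\eta$ from between two adjacent blocks, again fusing them with no separator. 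These two families of terms are in bijection and appear with opposite signs, so they cancel; this is the heart of why $\Sh$ satisfies a clean homotopy formula. What remains from $[\partial,\eta]$ is the $-\beta\alpha$ part: replacing an $\eta$ between blocks by $\beta\alpha$ produces exactly $\Sh(r_0,\ldots,r_{k-1})\,\beta\alpha\,\Sh(r_k,\ldots,r_m)$ summed over the split point $k$, once one checks that the sub-shuffles on the two sides of the inserted $\beta\alpha$ range precisely over all shuffles of the left and right halves (the shuffles of $(r_0,\ldots,r_{k-1})$ with $\eta$'s, then $\beta\alpha$, then the shuffles of $(r_k,\ldots,r_m)$ with $\eta$'s), giving the last displayed sum with the stated sign $(-1)^{k+\degree r0{k-1}}$.

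Two further checks close the argument. One must verify that the boundary $\partial^{\mathrm{alg}}$ terms at the extreme ends of the whole symbol — i.e.\ $\partial_0$ acting on the leftmost $r_0$ inside the first block, and the rightmost $r_m x$ action on the $M$-factor — are already accounted for by the $\partial^\otimes$ and $\partial^+$ sums as written (there is no $\partial^-$ at the ends because a length-one block cannot split, and no extra $\eta$ appears outside the outermost blocks); and one must confirm that the degenerate shuffle with no $\eta$'s at all, namely the single term $(r_0,\ldots,r_m)x$ itself, contributes correctly, its differential being exactly $\partial(r_0,\ldots,r_m)\cdot x$ together with $\pm(r_0,\ldots,r_m)\partial x$, which is absorbed into the first two lines of the claimed formula. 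The main obstacle, and the only place real care is needed, is sign bookkeeping: every $\eta$ has degree $1$, the symbol $(r_i,\ldots,r_j)$ has degree $(j-i)+\degree rij$, and the Koszul signs from commuting $\partial$ past earlier factors must be reconciled with the signs $(-1)^{k+\degree r0{k-1}}$ and $(-1)^{k-1+\degree r0{k-1}}$ appearing in the definition of the differential on $\mcR_\infty$; I would handle this by checking the sign on one representative term of each of the three types (block-internal $\partial^\otimes$, block-internal $\partial^+$, and $\eta\mapsto\beta\alpha$) and on the split/remove cancellation, since the shuffle structure is uniform enough that a single verification propagates.
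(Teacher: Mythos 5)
Your proposal is correct and follows essentially the same route as the paper: expand $[\partial,\Sh(r_0,\ldots,r_m)]$ by the graded Leibniz rule over the blocks and the interspersed $\eta$'s, reassemble the $\partial^\otimes$ and $\partial^+$ contributions, observe that the $\partial^-$ splittings cancel in pairs against the $\id$ part of $[\partial,\eta]=\id-\beta\alpha$, and keep the surviving $\beta\alpha$ terms. The paper merely packages the expansion more compactly as $\sum\Sh(\cdots)[\partial,\eta]\Sh(\cdots)+\sum\Sh(\cdots)\eta[\partial,(r_i,\ldots,r_j)]\eta\Sh(\cdots)$ before performing the same cancellation.
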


\begin{proof}
This is a relatively simple computation:
\begin{align*}
[\partial,\Sh(r_0,\ldots,r_m)] & =\sum_{k=1}^m\sign{k-1+\degree r0{k-1}}\Sh(r_0,\ldots,r_{k-1})[\partial,\eta]\Sh(r_k,\ldots,r_m) \\
& +\sum_{\makebox[0pt]{$\scriptstyle 0\leq i\leq j\leq m$}}\sign{i+\degree r0{i-1}}\Sh(r_0,\ldots,r_{i-1})\eta[\partial,(r_i,\ldots,r_j)]\eta\Sh(r_{j+1},\ldots,r_m) \\
\end{align*}
(in the case $i=0$ and/or $j=m$ the term on the left and/or right of $[\partial,(r_i,\ldots,r_j)]$ is to be left out). The first term equals
\begin{align*}
& \sum_{k=1}^m\sign{k-1+\degree r0{k-1}}\Sh(r_0,\ldots,r_{k-1})\Sh(r_k,\ldots,r_m) \\
& +\sum_{k=1}^m\sign{k+\degree r0{k-1}}\Sh(r_0,\ldots,r_{k-1})\beta\alpha\Sh(r_k,\ldots,r_m)
\end{align*}
while the second is
\begin{align*}
& \phantom{{}={}}\sum_{\makebox[0pt]{$\scriptstyle 0\leq i\leq k\leq j\leq m$}}\sign{k+\degree r0{k-1}}\Sh(r_0,\ldots,r_{i-1})\eta(r_i,\ldots,\partial r_k,\ldots,r_j)\eta\Sh(r_{j+1},\ldots,r_m) \\
& \ \ \ +\sum_{\makebox[0pt]{$\scriptstyle 0\leq i<k\leq j\leq m$}}\sign{k-1+\degree r0{k-1}}\Sh(r_0,\ldots,r_{i-1})\eta(r_i,\ldots,r_{k-1}r_k,\ldots,r_j)\eta\Sh(r_{j+1},\ldots,r_m) \\
& \ \ \ +\sum_{\makebox[0pt]{$\scriptstyle 0\leq i<k\leq j\leq m$}}\sign{k+\degree r0{k-1}}\Sh(r_0,\ldots,r_{i-1})\eta(r_i,\ldots,r_{k-1})(r_k,\ldots,r_j)\eta\Sh(r_{j+1},\ldots,r_m) \\
& =\Sh(\partial^\otimes(r_0,\ldots,r_m))+\sum_{k=1}^m\sign{k-1+\degree r0{k-1}}\Sh(r_0,\ldots,r_{k-1}r_k,\ldots,r_m) \\
& \phantom{{}=\Sh(\partial^\otimes(r_0,\ldots,r_m))} \ \ \ +\sum_{k=1}^m\sign{k+\degree r0{k-1}}\Sh(r_0,\ldots,r_{k-1})\Sh(r_k,\ldots,r_m).
\end{align*}
Adding these together and cancelling the equal terms yields the desired formula.
\end{proof}

\section{Strictification of $\mcR_\infty$-modules}
Consider an $\mcR_\infty$-module $M$. We will define its resolution, which will be an $\mcR$-module. When $M$ is an $\mcR$-module, it is the bar construction $B(\mcR,\mcR,M)$ and in the general case, we have to accomodate the differential to the fact that $M$ does not have a strict action of $\mcR$. What this means is that
\[(BM)_m=\mcR\otimes(\mcR)^{\otimes m}\otimes M\]
is only a ``homotopy coherent'' semi-simplicial object (the simplicial identities do not hold strictly, but only up to a coherent system of higher order homotopies). We will not give details here of how this structure can be described explicitly as it turns out, that one may strictify this diagram in a simple way and get a sort of cubical diagram whose geometric realization we will now describe.

Concretely, on the graded abelian group
\[BM=\bigoplus_m\mcR\otimes (s\mcR)^{\otimes m}\otimes M,\]
consider the operators (where we use $|$ instead of $\otimes s$ as usual to increase readability)
\begin{align*}
\partial_k^\otimes(r_0|\cdots|r_m\otimes x) & =\sign{k+\degree r0{k-1}}r_0|\cdots|\partial r_k|\cdots|r_m\otimes x, && 0\leq k\leq m \\
\partial_{m+1}^\otimes(r_0|\cdots|r_m\otimes x) & =\sign{m+\degree r0m}r_0|\cdots|r_m\otimes \partial x \\
\partial_k^+(r_0|\cdots|r_m\otimes x) & =\sign{k-1+\degree r0{k-1}}r_0|\cdots|r_{k-1}r_k|\cdots|r_m\otimes x, && 1\leq k\leq m \\
\partial_k^-(r_0|\cdots|r_m\otimes x) & =\sign{k+\degree r0{k-1}}r_0|\cdots|r_{k-1}\otimes (r_k,\ldots,r_m)x, && 1\leq k\leq m.
\end{align*}
of degree $-1$. We define $\partial^\otimes=\partial_0^\otimes+\cdots+\partial_{m+1}^\otimes$ and similarly $\partial^+=\partial_1^++\cdots+\partial_m^+$ and $\partial^-=\partial_1^-+\cdots+\partial_m^-$. The differential $\partial^\otimes$ is that of the tensor product $\mcR\otimes (s\mcR)^{\otimes m}\otimes M$. Finally, we define the differential on $BM$ as
\[\partial=\partial^\otimes+\partial^++\partial^-.\]

\begin{lemma}
The operator $\partial$ is a differential, $\partial^2=0$.
\end{lemma}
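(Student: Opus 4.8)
The plan is to verify $\partial^2=0$ by a direct computation, organized so that all cancellations reduce to a few standard inputs. The statement is the natural generalization of the classical fact that the two-sided bar construction $B(\mcR,\mcR,M)$ is a chain complex: indeed $BM$ specializes to $B(\mcR,\mcR,M)$ when the $\mcR_\infty$-action on $M$ comes from a strict $\mcR$-action, in which case $\partial^-_k=0$ for $k<m$ and $\partial^-_m(r_0|\cdots|r_m\otimes x)=\sign{m+\degree r0{m-1}}r_0|\cdots|r_{m-1}\otimes r_mx$. Moreover the combinatorics will run in parallel with the verification that $\mcR_\infty=\Omega B\mcR$ carries a differential, with the action of $\mcR_\infty$ on $M$ playing the role of the product of $\mcR_\infty$.

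Write $\partial=\partial^\otimes+\partial^++\partial^-$; all three summands have odd degree $-1$, so
\[\partial^2=(\partial^\otimes)^2+[\partial^\otimes,\partial^+]+(\partial^+)^2+[\partial^\otimes,\partial^-]+[\partial^+,\partial^-]+(\partial^-)^2.\]
The first three terms involve only the tensor factors $\mcR\otimes(s\mcR)^{\otimes m}$, together with $\partial_M$ acting in the last slot, and they vanish exactly as for the bar construction of the dga $\mcR$: one has $(\partial^\otimes)^2=0$ because $\partial^\otimes$ is the tensor-product differential (using $\partial_\mcR^2=0$ and $\partial_M^2=0$), $(\partial^+)^2=0$ by associativity of the multiplication of $\mcR$, and $[\partial^\otimes,\partial^+]=0$ by the graded Leibniz rule $\partial_\mcR(rr')=(\partial_\mcR r)r'+\sign{|r|}r\,\partial_\mcR r'$. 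Note that the summand $\partial^\otimes_{m+1}$, which applies $\partial_M$ to the last factor, has support disjoint from every $\partial^+_k$, hence anticommutes with $\partial^+$. This settles all terms not involving $\partial^-$.

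The heart of the matter is $[\partial^\otimes,\partial^-]+[\partial^+,\partial^-]+(\partial^-)^2$. Fix $k$ with $1\leq k\leq m$ and set $\rho=(r_k,\ldots,r_m)\in\mcR_\infty$. Applying $\partial^-_k$ collapses the tail $(r_k,\ldots,r_m)\otimes x$ into the single element $\rho x\in M$, now occupying the last tensor slot of $r_0|\cdots|r_{k-1}\otimes\rho x$; consequently a differential applied next to that slot can only be $\partial_M$, while a differential applied to a letter lying to the left of it commutes, up to sign, past $\partial^-_k$, since the two act on disjoint tensor factors. Collecting all terms of $\partial^2$ in which $\partial^-_k$ occurs and the $M$-slot or a tail letter is subsequently affected --- namely $\partial_M$ of $\rho x$ after the collapse, together with $\partial^-_k$ applied after $\partial^\otimes_l$ for $k\leq l\leq m$, after $\partial^\otimes_{m+1}$, after $\partial^+_l$ for $k<l\leq m$, and after $\partial^-_l$ for $k<l\leq m$ --- one obtains $r_0|\cdots|r_{k-1}$ tensored with
\[\pm\big(\partial_M(\rho x)-(\partial\rho)x-\sign{|\rho|}\rho\,\partial_Mx\big),\]
where $\partial\rho$ denotes the differential of the generator $\rho$ in $\mcR_\infty$ and its $\partial^\otimes$-, $\partial^+$- and $\partial^-$-parts account for the $\partial^-_k\partial^\otimes_l$, $\partial^-_k\partial^+_l$ and $\partial^-_k\partial^-_l$ contributions respectively, the last one using the $\mcR_\infty$-module associativity $(\sigma\tau)x=\sigma(\tau x)$ to rewrite $(r_k,\ldots,r_{l-1})\big((r_l,\ldots,r_m)x\big)$ as $\big((r_k,\ldots,r_{l-1})(r_l,\ldots,r_m)\big)x$. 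By the Leibniz rule that defines the $\mcR_\infty$-module structure on $M$, this expression is zero. Summing over $k$, the only composites not yet accounted for are those in which $\partial^-_k$ is followed by $\partial^\otimes$ or $\partial^+$ acting solely on the surviving letters $r_0,\ldots,r_{k-1}$; each such composite is cancelled by the same two operations performed in the other order, these being anticommuting operators of disjoint support. This exhausts all the terms of $\partial^2$ and proves $\partial^2=0$.

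I expect the sign bookkeeping to be the real obstacle. Each term above carries a Koszul sign depending on the degrees and positions of $r_0,\ldots,r_m$ and $x$, and one must check that the exponents built into $\partial^\otimes$, $\partial^+$ and $\partial^-$ --- in particular $m+\degree r0m$ on $\partial^\otimes_{m+1}$ and $k+\degree r0{k-1}$ on $\partial^-_k$ --- are calibrated so that, once a tail is collapsed, the residual signs reproduce precisely the signs occurring in the differential of $\mcR_\infty$ on the generator $\rho$ and the sign $\sign{|\rho|}$ in the module Leibniz rule. The cleanest way to carry this out is with our basic sign conventions: present every operator as a degree-$0$ map out of a suitably suspended complex, letting the Koszul rule generate all the signs automatically. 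This calibration of signs is exactly the reason why the module $M$, unlike the factors $\mcR$, is not suspended in the definition of $BM$.
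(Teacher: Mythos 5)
Your argument is correct and is essentially the paper's own proof: the paper likewise disposes of $(\partial^\otimes+\partial^+)^2=0$ as in the ordinary bar construction and then verifies, for each $k$, that $\partial_k^-\partial+(\partial^\otimes+\partial^+)\partial_k^-=0$, which is exactly your grouping of the composites touching the collapsed tail into the Leibniz defect $\partial_M(\rho x)-(\partial\rho)x-\sign{|\rho|}\rho\,\partial_Mx$ (using associativity of the $\mcR_\infty$-action for the $\partial_k^-\partial_\ell^-$ terms) together with the disjoint-support cancellations on the head. The only difference is that the paper records the explicit signs in the two key composites $\partial_k^-\partial_\ell^-$ and $\partial_k^-\partial_\ell^+$, which is the bookkeeping you defer.
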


\begin{proof}
It is easy to verify the following relations for $k<\ell$
\[\partial^\otimes\partial^\otimes=0, \ \ \ \ 
\partial_k^+\partial_\ell^++\partial_{\ell-1}^+\partial_k^+=0, \ \ \ \ 
\partial_\otimes \partial_\ell^++\partial_\ell^+\partial_\otimes=0\]
yielding $(\partial^\otimes+\partial^+)^2=0$. Moreover, one has for $k<\ell$
\begin{align*}
\partial_k^-\partial_\ell^-(r_0|\cdots|r_m\otimes x) & =\sign{k+\ell+\degree rk{\ell-1}}r_0|\cdots|r_{k-1}\otimes (r_k,\ldots,r_{\ell-1})(r_\ell,\cdots,r_m)x \\
\partial_k^-\partial_\ell^+(r_0|\cdots|r_m\otimes x) & =\sign{k+\ell-1+\degree rk{\ell-1}}r_0|\cdots|r_{k-1}\otimes (r_k,\ldots,r_{\ell-1}r_\ell,\cdots,r_m)x
\end{align*}
while for $k>\ell$ the following hold
\begin{align*}
(\partial_k^-\partial_\ell^++\partial_\ell^+\partial_k^-)(r_0|\cdots|r_m\otimes x) & =0 \\
(\partial_k^-\partial_\otimes+\partial_\otimes \partial_k^-)(r_0|\cdots|r_m\otimes x) & =-r_0|\cdots|r_{k-1}\otimes (\partial^\mathrm{alg}(r_k,\ldots,r_m))x
\end{align*}
which sum up to $\partial_k^-\partial+(\partial^\otimes+\partial^+)\partial_k^-=0$. Summing up over $k$ and with the previous $(\partial^\otimes+\partial^+)^2=0$, we finally obtain $\partial^2=0$.
\end{proof}

There are obvious chain maps $\varepsilon_0\colon BM\to M$, sending $r_0|\cdots|r_m\otimes x$ to $(r_0,\ldots,r_m)x$, and $\zeta_0\colon M\to BM$, sending $x$ to $1\otimes x$. Together with the homotopy operator
\[\eta_0(r_0|\cdots|r_m\otimes x)=1|r_0|\cdots|r_m\otimes x,\]
they expresses $M$ as a deformation retract of $BM$. There is no sense in speaking about any equivariancy here --- $M$ is an $\mcR_\infty$-complex, while $BM$ is an $\mcR$-complex and none of these maps is $\mcR_\infty$-linear.

In the special case of an $\mcR$-module $M$, the projection $\varepsilon_0\colon BM\to M$ is $\mcR$-linear --- in fact, $BM=B(\mcR,\mcR,M)$ and $\varepsilon_0$ is the augmentation of Theorem~\ref{t:resolution_of_G_modules}. As observed in that theorem, it is part of an $\mcR$-linear reduction. This will be important later.

\section{Homotopy $\mcR$-linear maps}\label{s:R_linear_maps}

\begin{definition}
An \emph{$\mcR_\infty$-map} $M\to N$ of degree $d$ is a map $f\colon BM\to BN$ of the form
\[f(r_0|\cdots|r_m\otimes x)=\sum_{k=0}^m\sign{d(k+\degree r0k)}r_0|\cdots|r_k\otimes f_{m-k}|r_{k+1}|\cdots|r_m)x\]
for some maps $f_\ell\colon(s\mcR)^{\otimes\ell}\to\Hom(M,N)$ of degree $d$, which we call the \emph{components} of $f$. We will write $f_*\colon M\to N$ to denote the collection of the $f_\ell$.
\end{definition}

We will be mostly interested in $\mcR_\infty$-chain maps of degree $0$ but it is convenient to have also a notion of an $\mcR_\infty$-homotopy.

In the following proposition, we understand $(s\mcR)^{\otimes\ell}$ equipped with the differential $\partial^\otimes$.

\begin{proposition}
Let $f$ be an $\mcR_\infty$-map with components $f_\ell\colon(s\mcR)^{\otimes\ell}\to\Hom(M,N)$ of degree $d$. Then the differential $f'=[\partial,f]$ is an $\mcR_\infty$-map of degree $d-1$ whose components $f'_\ell$ satisfy the equations
\begin{align}
[\partial,f_\ell]|r_1|\cdots|r_\ell) & =f'_\ell|r_1|\cdots|r_\ell)+\sum_{k=1}^\ell\sign{d(k+\degree r1k)}(r_1,\ldots,r_k)\circ f_{\ell-k}|r_{k+1}|\cdots|r_\ell) \nonumber\\
& \ \ \ +\sum_{k=1}^{\ell-1}\sign{k+d+\degree r1k}f_{\ell-1}|r_1|\cdots|r_k\cdot r_{k+1}|\cdots|r_\ell) \nonumber\\
& \ \ \ +\sum_{k=0}^{\ell-1}\sign{k+1+d+\degree r1k}f_k|r_1|\cdots|r_k)\circ(r_{k+1},\ldots,r_\ell),
\tag{$\triangle$}\label{e:G_map}
\end{align}
where the elements of $\mcR_\infty$ in the first and the third row are to be interpreted as their respective images in $\Hom(N,N)$ and $\Hom(M,M)$.

In particular, $f$ is an $\mcR_\infty$-chain map of degree $d$ if and only if the equations \eqref{e:G_map} are satisfied with $f'_m=0$.
\end{proposition}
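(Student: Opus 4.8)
The plan is to compute $[\partial,f]$ directly from the explicit formula for $f$ and to read off its components. First I would note that $f\colon BM\to BN$ is built from the components $f_\ell$ together with the ``identity'' part (the $r_0|\cdots|r_k$ prefix), so $[\partial,f]=\partial_N f-\sign{d}f\partial_M$ decomposes according to which part of $\partial=\partial^\otimes+\partial^++\partial^-$ is applied and whether it lands in the prefix or interacts with $f$. The key observation is that all the terms in which $\partial^\otimes$, $\partial^+$, or $\partial^-$ acts entirely on the prefix $r_0|\cdots|r_k$ (or, via $f\partial_M$, on the part of the word that $f$ does not touch) cancel in pairs between $\partial_N f$ and $\sign{d}f\partial_M$; this is exactly the statement that $f$ is ``equivariant with respect to the unchanged letters'', and it is the same bookkeeping that underlies the proof that $\partial^2=0$ on $BM$. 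What survives is precisely the interaction of the differential with the ``active'' block $f_{m-k}|r_{k+1}|\cdots|r_m)$, and this has the same shape as the right-hand side of equation \eqref{e:G_map}, so that $f'=[\partial,f]$ is again an $\mcR_\infty$-map whose components are the $f'_\ell$ defined by that equation.

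Concretely I would proceed in the following steps. (1) Write $f=\sum_{k} \mu_k\circ(\id^{\otimes(k+1)}\otimes f_{m-k}\otimes\id)$ schematically and apply the graded Leibniz rule $[\partial,fg]=[\partial,f]g+\sign{|f|}f[\partial,g]$ together with the tensor version to split $[\partial,f]$ into: the term where $\partial^\otimes$ differentiates one of the letters inside the active block (giving $[\partial,f_\ell]$ minus the internal-$\partial^\otimes$ part, i.e. contributing $[\partial,f_\ell](r_1|\cdots|r_\ell)$ on the left and the internal differential of $f_\ell$ regarded via $\partial^\otimes$ on $(s\mcR)^{\otimes\ell}$); the $\partial^+$-terms that multiply two adjacent active letters (third row of \eqref{e:G_map}, up to sign, as the index $k+d$ sign comes from moving $\partial$ past the degree-$d$ map $f_\ell$ and past $k$ suspension symbols); the $\partial^-$-terms, which split an $\mcR_\infty$-generator off of the word — those splitting off a generator to the right of the active block give the $(r_1,\ldots,r_k)\circ f_{\ell-k}(\cdots)$ terms of the first row (action on $N$), those splitting off to the left give the $f_k(\cdots)\circ(r_{k+1},\ldots,r_\ell)$ terms of the last row (action on $M$), and those splitting the active block itself recombine with the prefix. (2) Carefully track the Koszul signs: every time $\partial$ (degree $-1$) or a letter is moved past $f_\ell$ (degree $d$) or past a suspension $s$, a sign appears, and collecting these reproduces exactly the exponents $d(k+\degree r1k)$, $k+d+\degree r1k$, and $k+1+d+\degree r1k$ in \eqref{e:G_map}. (3) Match the result term-by-term with the claimed formula, thereby identifying $f'_\ell$ as the unique solution of \eqref{e:G_map}; uniqueness of the components follows because the decomposition of a map $BM\to BN$ of the stated form into its components $f_\ell$ is unique (the prefix length $k$ can be read off). (4) The final sentence is then immediate: $f$ is a chain map, $[\partial,f]=0$, iff all components of $f'$ vanish, i.e. iff \eqref{e:G_map} holds with $f'_m=0$ for every $m$.

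The main obstacle I expect is purely the sign bookkeeping: verifying that the Koszul signs from commuting $\partial$, the suspension symbols $s$, and the degree-$d$ maps $f_\ell$ past one another combine into precisely the exponents written in \eqref{e:G_map}, and that the prefix/unchanged-letter terms cancel with the correct signs. This is the same kind of computation as in the lemma that $\partial^2=0$ on $BM$ and in Lemma~\ref{l:differential_Sh}, just with an extra degree-$d$ map floating around; there is no conceptual difficulty, only the need to be meticulous. A clean way to organize it is to use the sign convention $\degree xij$ systematically and to treat the degree-$d$ map $f_\ell$ as an element ``sitting between'' $s\mcR$-factors so that its commutation signs are forced, exactly as the section's sign conventions for $s$ and for tensor products of maps dictate.
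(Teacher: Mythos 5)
Your strategy is sound and would yield a correct proof, but it is organized differently from the paper's. You expand $[\partial,f]=\partial f-\sign{d}f\partial$ globally, sorting the contributions of $\partial^\otimes$, $\partial^+$, $\partial^-$ by whether they touch the prefix or the active block and cancelling the prefix-only terms. The paper instead runs an induction on the length of the word: it records the two one-step formulas $\partial(r_0|z)=\partial r_0|z-\sign{|r_0|}r_0|\partial z+\sign{|r_0|}r_0z-\sign{|r_0|}r_0\otimes\varepsilon_0 z$ and $f(r_0|z)=\sign{d(1+|r_0|)}r_0|fz+\sign{d|r_0|}r_0\otimes\varphi_{\max}z$, composes them in both orders, and observes that the $\mcR$-linearity of $f$ (i.e.\ $fr_0z=\sign{d|r_0|}r_0fz$) kills the cross terms, leaving a recursion $[\partial,f](r_0|z)=\pm r_0|[\partial,f]z+r_0\otimes(\cdots)$ from which both the $\mcR_\infty$-map form of $f'$ and the component formula drop out at once. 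The inductive organization buys a real economy: all Koszul signs are handled for a single letter $r_0$, the ``prefix terms cancel'' claim becomes a one-line consequence of $\mcR$-linearity rather than a case analysis over all positions, and the fact that $[\partial,f]$ is again of the required form is established simultaneously with the identification of its components (in your plan this uniqueness/readability of components is asserted separately in step (3), which is fine but needs the observation that the prefix length is visible in the output). Your direct approach is equally valid but forces you to track signs at every position simultaneously, exactly the bookkeeping you flag as the main obstacle. One caution: as written, your proposal defers that entire sign verification, and for this proposition the signs \emph{are} the content — a complete write-up must actually carry out step (2), or adopt the paper's one-letter induction precisely to make that step tractable.
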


\begin{tremark}
Another point of view is that the $f_\ell$ are maps $\mcR^{\otimes\ell}\to\Hom(M,N)$ of degree $\ell$. If we adopted different sign conventions on the algebra $\mcR_\infty$ and the bar construction $BM$, these would get the following interpretations: $f_0$ is a map, which preserves the action of $r\in\mcR$ up to homotopy $f_1|r)$ and a coherent system of higher order homotopies $f_2|r_1|r_2)$, etc.
\end{tremark}
 
\begin{proof}
Throughout the proof, we will use maps $\varphi_\ell(r_1|\cdots|r_\ell\otimes x)=f_\ell|r_1|\cdots|r_\ell)x$ of degree $d+1$. Their differentials are related to those of $f_\ell$ in exactly the same manner,
\[[\partial,\varphi_\ell](r_1|\cdots|r_\ell\otimes x)=[\partial,f_\ell]|r_1|\cdots|r_\ell)x.\]
We will also abbreviate $z=r_1|\cdots|r_\ell\otimes x$. One can easily check the following inductive formulas
\begin{align*}
\partial(r_0|z) & =\partial r_0|z-\sign{|r_0|}r_0|\partial z+\sign{|r_0|}r_0z-\sign{|r_0|}r_0\otimes\alpha z, \\
f(r_0|z) & =\sign{d(1+|r_0|)}r_0|fz+\sign{d|r_0|}r_0\otimes\varphi_\mathrm{max}z,
\end{align*}
where $\varphi_\mathrm{max}$ denotes the component with the maximal index; in particular $\varphi_\mathrm{max}z=\varphi_\ell z$. Composing in one direction, we get
\begin{align*}
\partial f(r_0|z) & =\sign{d(1+|r_0|)}\partial r_0|fz+\sign{(d-1)(1+|r_0|)}r_0|\partial fz \\
& \ \ \ -\sign{(d-1)(1+|r_0|)}r_0fz+\sign{(d-1)(1+|r_0|)}r_0\otimes\alpha fz \\
& \ \ \ +\sign{d|r_0|}\partial r_0\otimes\varphi_\mathrm{max}z+\sign{(d-1)|r_0|}r_0\otimes\partial\varphi_\mathrm{max}z,
\end{align*}
while the composition in the opposite direction is
\begin{align*}
f\partial(r_0|z) & =\sign{d|r_0|}\partial r_0|fz+\sign{d(1+|r_0|)}\partial r_0\otimes\varphi_\mathrm{max}z \\
& \ \ \ +\sign{(d-1)(1+|r_0|)}r_0|f\partial z-\sign{(d-1)|r_0|}r_0\otimes\varphi_\mathrm{max}\partial z \\
& \ \ \ +\sign{|r_0|}fr_0z-\sign{(d-1)|r_0|}r_0\otimes f_0\alpha z.
\end{align*}

By its form, $f$ is always $\mcR$-linear, i.e.~$fr_0z=\sign{d|r_0|}r_0fz$. Thus, the corresponding terms in the difference $[\partial,f]=\partial f-\sign{d}f\partial$ cancel out and we obtain
\begin{align*}
& [\partial,f](r_0|z)=\sign{(d-1)(1+|r_0|)}r_0|[\partial,f]z \\
& \ \ \ +\sign{(d-1)|r_0|}r_0\otimes(-\sign{d}\alpha fz+\partial\varphi_\mathrm{max}z-\sign{d+1}\varphi_\mathrm{max}\partial z-\sign{d+1}f_0\alpha z).
\end{align*}
Consequently $f'=[\partial,f]$ is an $\mcR_\infty$-map of degree $d-1$ with components $\varphi'_\ell$ given by
\[\varphi'_\ell=-\sign{d}\alpha f+\partial\varphi_\ell-\sign{d+1}\varphi_\mathrm{max}\partial-\sign{d+1}f_0\alpha\]
The differential from the statement equals
\begin{align*}
[\partial,\varphi_\ell] & =\partial\varphi_\ell-\sign{d+1}\varphi_\ell\partial^\otimes \\
& =(\partial\varphi_m-\sign{d+1}\varphi_\mathrm{max}\partial)+\sign{d+1}\varphi_\mathrm{max}(\partial^++\partial^-).
\end{align*}
Expressing the first term from the formula for $\varphi'_\ell$ we obtain
\[[\partial,\varphi_\ell]=\varphi'_\ell+\sign{d}\alpha f+\sign{d+1}\varphi_{\ell-1}\partial^++\sign{d+1}(\varphi_\mathrm{max}\partial^-+f_0\alpha).\]
The terms of this equation correspond exactly to the terms of \eqref{e:G_map}.
\end{proof}

The components of the composition $gf$ of two $\mcR_\infty$-maps $f$ and $g$ are easily seen to be
\[(gf)_\ell|r_1|\cdots|r_\ell)=\sum_{k=0}^\ell\sign{|f|(k+\degree r1k)}g_k|r_1|\cdots|r_k)\circ f_{\ell-k}|r_{k+1}|\cdots|r_\ell).\]
The composition is associative with unit $\id\colon BM\to BM$, whose components are $\id_0=\id$ and $\id_\ell=0$ for all $\ell>0$, see also the first example below.

\begin{example}\hskip0pt
\begin{enumerate}[labelindent=\parindent,leftmargin=2\parindent,label=--]
\item
Any $\mcR_\infty$-linear map $f_0\colon M\to N$ extends to an $\mcR_\infty$-map $f_*\colon M\to N$ by $f_\ell=0$, for all $\ell>0$. This is clear from \eqref{e:G_map}.

\item
The projection $\varepsilon_0\colon BM\to M$ can be made into an $\mcR_\infty$-map $\varepsilon_*\colon BM\to M$ by
\[\varepsilon_\ell|r_1|\cdots|r_\ell)(r'_0|\cdots|r'_m\otimes x)=\sign{\ell+\degree r1\ell}(r_1,\ldots,r_\ell,r'_0,\ldots,r'_m)x.\]
The inclusion $\zeta_0\colon M\to BM$ can be made into an $\mcR_\infty$-map $\zeta_*\colon M\to BM$ by
\[\zeta_\ell|r_1|\cdots|r_\ell)x=1|r_1|\cdots|r_\ell\otimes x.\]
We have $\varepsilon\zeta=\id$, while the other composition $\zeta\varepsilon$ is $\mcR_\infty$-homotopic to $\id$ via the $\mcR_\infty$-homotopy $\eta_*$ with components
\[\eta_\ell|r_1|\cdots|r_\ell)(r'_0|\cdots|r'_m\otimes x)=\sign{\ell+\degree r1\ell}1|r_1|\cdots|r_\ell|r'_0|\cdots|r'_m\otimes x.\]
Put together, they give an $\mcR_\infty$-reduction $(\varepsilon_*,\zeta_*,\eta_*)\colon BM\to M$.
\end{enumerate}
\end{example}

\subsection*{Transfer of the structure}
Suppose now that $M$ is an $\mcR_\infty$-module and that there is given a reduction $(\alpha_0,\beta_0,\eta)\colon M\Ra N$. We will extend $\alpha_0$ and $\beta_0$ to $\mcR_\infty$-maps $\alpha_*$ and $\beta_*$. With a bit of extra work on the homotopy operator in the following section, we will obtain an $\mcR$-linear reduction $BM\Ra BN$.

The respective components of $\alpha_*$ and $\beta_*$ are
\begin{align*}
\alpha_\ell|r_1|\cdots|r_\ell)x & =\sign{\ell+\degree r1\ell}\alpha_0\Sh(r_1,\ldots,r_\ell)\eta x \\
\beta_\ell|r_1|\cdots|r_\ell)y & =\eta\Sh(r_1,\ldots,r_\ell)\beta_0 y
\end{align*}
We will now show, that $\beta_*$ is indeed an $\mcR_\infty$-map, leaving $\alpha_*$ to the reader. We need to verify the equations \eqref{e:G_map}. Thus, we compute
\begin{align*}
[\partial,\beta_\ell]|r_1|\cdots|r_\ell) & =[\partial,\beta_\ell|r_1|\cdots|r_\ell)]-\beta_\ell\partial^\otimes|r_1|\cdots|r_\ell) \\
& =[\partial,\eta]\Sh(r_1,\ldots,r_\ell)\beta_0-\eta[\partial,\Sh(r_1,\ldots,r_\ell)]\beta_0 \\
& \ \ \ +\eta(\Sh\partial^\otimes(r_1,\ldots,r_\ell))\beta_0
\end{align*}
which equals, by the Leibniz rule and Lemma~\ref{l:differential_Sh}, to the sum
\begin{align*}
(\id-\beta_0\alpha_0)\Sh(r_1,\ldots,r_\ell)\beta_0 & -\sum_{k=1}^{\ell-1}\sign{k-1+\degree r1k}\underbrace{\eta\Sh(r_1\ldots,r_kr_{k+1},\ldots,r_\ell)\beta_0}_{\beta_{\ell-1}|r_1|\cdots|r_kr_{k+1}|\cdots|r_\ell)} \\
& {}-\sum_{k=1}^{\ell-1}\sign{k+\degree r1k}\underbrace{\eta\Sh(r_1,\ldots,r_k)\beta_0}_{\beta_k|r_1|\cdots|r_k)}\underbrace{\alpha_0\Sh(r_{k+1},\ldots,r_\ell)\beta_0}_{(r_{k+1},\ldots,r_\ell)}
\end{align*}
The first term is $\Sh(r_1,\ldots,r_\ell)\beta_0-\beta_0(r_1,\ldots,r_\ell)$. The computation is finished by the observation
\[\Sh(r_1,\ldots,r_\ell)\beta_0=(r_1,\ldots,r_\ell)\beta_0+\sum_{k=1}^{\ell-1}(r_1,\ldots,r_k)\underbrace{\eta\Sh(r_{k+1},\ldots,r_\ell)\beta_0}_{\beta_{\ell-k}|r_{k+1}|\cdots|r_\ell)}.\]

It is easy to see that the composition $\alpha\beta$ equals $\id$ since the only non-zero composite is $\alpha_0\beta_0=\id$, thanks to the identities $\alpha_0\eta=0$, $\eta\beta_0=0$ and $\eta\eta=0$.

\subsection*{Constructing the homotopy operator for $BM\Ra BN$}
The homotopy operator $\eta$ of the reduction $M\Ra N$ is not an $\mcR_\infty$-map in general. An $\mcR$-linear homotopy operator for $BM\Ra BN$ is constructed by Lemma~\ref{l:weak_reduction} from the algorithmic acyclicity of $\ker\alpha$ which we now prove. We assume at this point more generally, that $\alpha$ is an arbitrary $\mcR_\infty$-map.

\begin{proposition}
Let $\alpha\colon BM\ra BN$ be an $\mcR_\infty$-map for which $\ker\alpha_0$ is acyclic. Then $\ker\alpha$ is also acyclic and the same is true in the algorithmic setup.
\end{proposition}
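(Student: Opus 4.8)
The plan is to exploit the fact that an $\mcR_\infty$-map $\alpha\colon BM\to BN$ is filtered: using the ``length'' (word-length in the bar) filtration $F_p BM = \bigoplus_{m\leq p}\mcR\otimes(s\mcR)^{\otimes m}\otimes M$, the map $\alpha$ sends $F_p BM$ into $F_p BN$, since each component $\alpha_\ell$ either keeps the number of bars fixed (the $f_0$-part) or decreases it. The associated graded of the differential on $BM$ is, up to signs, $\partial^\otimes$ on $\mcR\otimes(s\mcR)^{\otimes m}\otimes M$ together with the part of $\partial^-$ that does not shorten the bar word — in fact on the associated graded only the purely ``tensor'' differential $\partial^\otimes$ survives together with the $\alpha_0$-part of $\partial^-$, and the induced map on associated graded is $\bigoplus_m \id_{\mcR\otimes(s\mcR)^{\otimes m}}\otimes\alpha_0$. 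Hence $\ker\alpha$ is filtered with $\mathrm{gr}\,\ker\alpha \cong \bigoplus_m \mcR\otimes(s\mcR)^{\otimes m}\otimes \ker\alpha_0$ (using that $\mcR\otimes(s\mcR)^{\otimes m}$ is free as a graded abelian group, so tensoring is exact). Since $\ker\alpha_0$ is acyclic and $\mcR\otimes(s\mcR)^{\otimes m}$ is a complex of free abelian groups, each graded piece is acyclic by the Künneth theorem, and a standard filtration/spectral-sequence argument (the filtration is bounded below and exhaustive in each degree) gives that $\ker\alpha$ is acyclic.

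For the algorithmic refinement I would make this effective by a downward induction on the filtration degree, exactly mirroring the recursive construction of contractions in Lemma~\ref{l:weak_reduction} and Theorem~\ref{t:resolution_of_G_modules}. Concretely, given a cycle $z\in\ker\alpha$, write $z = z_0 + z_{<p}$ where $z_0$ is its top-length component (length $p$) and $z_{<p}$ has strictly smaller length; then $z_0$ is a cycle for $\partial^\otimes\otimes\id + \id\otimes\text{(}\alpha_0\text{-part)}$, i.e. a cycle in $\mcR\otimes(s\mcR)^{\otimes p}\otimes\ker\alpha_0$. Using the algorithmic acyclicity of $\ker\alpha_0$ together with an explicit contraction of the acyclic complex $\mcR\otimes(s\mcR)^{\otimes p}$ built from a contraction of $s\mcR$ (which exists since $s\mcR$ is free with a distinguished basis, or more simply by iterating the ``extra degeneracy'' $1|{-}$ as in Theorem~\ref{t:resolution_of_G_modules}), one computes $c_0$ with $\partial c_0 - z_0$ of strictly smaller length; subtracting and recursing on $z - \partial c_0$, which now has length $<p$, terminates after finitely many steps and produces $c$ with $\partial c = z$.

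The main obstacle is bookkeeping the differential of $BM$ against the filtration: unlike the strict bar construction, $\partial^-$ here involves the full $\mcR_\infty$-action $(r_k,\ldots,r_m)x$, whose components $(r_k,\ldots,r_m)$ with $m-k\geq 1$ are built from $\eta$ and genuinely shorten the bar word, so one must check carefully that these are exactly the pieces that vanish on the associated graded and do not interfere with the inductive step — only the $m=k$ term, i.e. the strict-looking $\partial_k^-$ with $(r_k)x = $ the degree-zero part of the action, could a priori survive, and one checks it contributes the $\alpha_0$-type map rather than something that breaks the acyclicity of the graded pieces. Once the filtration is set up correctly, both the abstract statement and its effective version follow from the corresponding facts for $\ker\alpha_0$ together with freeness of $\mcR$ over $\bbZ$; I would state the effective contraction as the actual output, since that is what Lemma~\ref{l:weak_reduction} consumes to build the homotopy operator of $BM\Ra BN$.
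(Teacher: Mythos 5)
Your overall strategy---filter $BM$ by bar-length, observe that the top-length component of any $z\in\ker\alpha$ lies in $\mcR\otimes(s\mcR)^{\otimes m}\otimes\ker\alpha_0$ and that the top-length part of $\partial z$ is $\partial^\otimes z_{\max}$, then kill the top component and recurse downward---is exactly the paper's argument. But the step where you actually produce the element $c_0$ killing the top component is wrong as written. You propose to build the needed contraction of the top graded piece from ``an explicit contraction of the acyclic complex $\mcR\otimes(s\mcR)^{\otimes p}$'' (or of $s\mcR$). These complexes are not acyclic: for $\mcR=\mcG=\bbZ G$ they are concentrated in a single degree with zero differential, so they admit no contraction; freeness of $s\mcR$ as a graded group is irrelevant to contractibility, and the extra degeneracy $1|{-}$ contracts $\ker\varepsilon\subseteq B(\mcR,\mcR,M)$, not $(s\mcR)^{\otimes p}$. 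What you actually need---and what the paper uses---is that a contraction $\eta$ of the \emph{last} factor $\ker\alpha_0$ induces the contraction $\id\otimes\id^{\otimes p}\otimes\eta$ of $\bigl(\mcR\otimes(s\mcR)^{\otimes p}\otimes\ker\alpha_0,\ \partial^\otimes\bigr)$, since $[\partial^\otimes,\id\otimes\cdots\otimes\eta]=\id\otimes\cdots\otimes[\partial,\eta]=\id$. No contractibility of the $\mcR$-factors is needed or available. This also means you should first upgrade the algorithmic acyclicity of $\ker\alpha_0$ to a computable contraction (as in the discussion preceding Lemma~\ref{l:weak_reduction}, or directly from the homotopy operator of the reduction $M\Ra N$ in the intended application), rather than feeding raw acyclicity into the graded piece.

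A secondary caveat about your first paragraph: identifying $\mathrm{gr}\,\ker\alpha$ with $\ker(\mathrm{gr}\,\alpha)=\bigoplus_m\mcR\otimes(s\mcR)^{\otimes m}\otimes\ker\alpha_0$ needs justification. In general the associated graded of a kernel only injects into the kernel of the associated graded map; surjectivity would require lifting an arbitrary top-length element with coefficients in $\ker\alpha_0$ to an honest element of $\ker\alpha$ by correcting with lower-length terms, which is not automatic for an arbitrary $\mcR_\infty$-map $\alpha$. The paper avoids this by never invoking a spectral sequence: it uses only the easy inclusion (top components of elements of $\ker\alpha$ lie in $\mcR\otimes(s\mcR)^{\otimes m}\otimes\ker\alpha_0$), which is all the downward induction requires. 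With the contraction of the graded piece corrected as above, your second paragraph becomes the paper's proof.
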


\begin{proof}
The defining formula for $\alpha$,
\[\alpha(r_0|\cdots|r_m\otimes x)=\sum_{k=0}^m r_0|\cdots|r_k\otimes\alpha_{m-k}|r_{k+1}|\cdots|r_m)x,\]
shows that any $z\in\ker\alpha$ has to have its component $z_{\mathrm{max}}$ of maximal length lying in $\mcR\otimes(s\mcR)^{\otimes m}\otimes\ker\alpha_0$. Moreover the component of $\partial z$ of the maximal length $m$ equals $\partial_\otimes z_\mathrm{max}$. Thus, if $z$ is a cocycle, $z_{\mathrm{max}}$ has to be a cocycle with respect to $\partial_\otimes$. Let $\eta$ be a contracting homotopy of $\ker\alpha_0$. Then $\id\otimes\id^{\otimes m}\otimes\eta$ is a contracting homotopy of $\mcR\otimes(s\mcR)^{\otimes m}\otimes\ker\alpha_0$ and we obtain
\begin{align*}
\partial(\id\otimes\id^{\otimes m}\otimes\eta)z_{\mathrm{max}} & =\partial_\otimes(\id\otimes\id^{\otimes m}\otimes\eta)z_{\mathrm{max}}+\textrm{shorter terms} \\
& =z_{\mathrm{max}}-(\id\otimes\id^{\otimes m}\otimes\eta)\partial_\otimes z_{\mathrm{top}}+\textrm{shorter terms} \\
& =z_{\mathrm{max}}+\textrm{shorter terms}.
\end{align*}
Consequently, $z-\partial(\id\otimes\id^{\otimes m}\otimes\eta)z_{\mathrm{max}}$ has length at most $m-1$ and we may finish the computation of a chain $c$ with $z=\partial c$ by induction.
\end{proof}

Combining the above constructions with Lemma~\ref{l:weak_reduction}, we obtain an $\mcR$-linear reduction $BM\Ra BN$.

\subsection*{The proof of Theorem~\ref{t:main_theorem}}\label{s:proof_theorem}
Let $M$ be a $\mcG$-complex and let there be given a span of reductions $M\La\widetilde M\Ra N$. Then we have constructed a span of $\mcG$-linear reductions $BM\La B\widetilde M\Ra BN$ appearing on the right of
\[M\La BM\La B\widetilde M\Ra BN.\]
By the explicit form of the involved operators, it is clear that this construction is algorithmic. The reduction on the left is given by Theorem~\ref{t:resolution_of_G_modules} and the fact that $BM=B(\mcG,\mcG,M)$.

\begin{remark}[on polynomiality]
Let the finite group $G$ be fixed. Assume, that all the algorithms involved in $M$, $N$ (including the action of $G$ on $M$), and in the strong equivalence $M\LRa N$, have running times on an input $x$ bounded by a function $p(|x|,\operatorname{size}x)$ which is polynomial when the dimension $|x|$ of the input is fixed.

It is then clear from our formulas that the same will be true for the strong equivalence $M\LRa BN$. In \cite{polypost}, we define a ``chain complex with polynomial-time homology'' as a parametrized family of strong equivalences as above (the involved polynomials also depend on the parameter), where in addition we require an algorithm that outputs a basis of each $N_n$ with running time bounded by a polynomial function of the parameter. Then, given such a family, the resulting family will have ``polynomial-time equivariant homology'' (since clearly the rank of $(BN)_n$ over $\mcG$ is $\operatorname{rk}(BN)_n=\sum_{m=0}^n|G|^m\operatorname{rk}N_{n-m}$ and thus bounded by a polynomial).
\end{remark}

\section{The equivariant (co)homology of Eilenberg-MacLane spaces}\label{s:proof_corollary}

\begin{proof}[Proof of Corollary~\ref{c:main_corollary}]
Let $WG$ denote the total space of the universal principal twisted cartesian product $WG\to\overline WG$, see e.g.~\cite{May}. Since $G$ is finite, $WG$ is a locally finite simplicial set and thus $C_*WG$ is a locally finite $\bbZ$-complex. The standard results of effective algebraic topology, see e.g.~\cite[Theorem~3.16]{polypost}, provide a strong equivalence of $C_*K(\pi,n)$ with a locally finite $\bbZ$-complex $D$ (one says that $K(\pi,n)$ has effective homology). The Eilenberg-Zilber theorem, or rather its algorithmic version, see e.g.~\cite[Theorem~124]{Sergeraert}, then provides a reduction $M=C_*(WG\times K(\pi,n))\Ra C_*WG\otimes C_*K(\pi,n)$. Composing with the previous, one obtains a strong equivalence $M\LRa C_*WG\otimes D$ with a locally finite $\bbZ$-complex $C_*WG\otimes D$. Theorem~\ref{t:main_theorem} then constructs a $\mcG$-linear strong equivalence of $M$ with a locally finite $\mcG$-complex $N=B(C_*WG\otimes D)$. Thus the (co)homology groups of
\[C_*(WG\times_GK(\pi,n))\cong C_*(WG\times K(\pi,n))/G=M/G,\]
are isomorphic to the (co)homology groups of $N/G$ and these may be computed e.g.~by a simple application of the Smith normal form of the differentials in $N/G$. The Smith normal form can be even computed in polynomial time, see~\cite{SNF}.
\end{proof}

\section{Notes}

\subsection*{A note on homotopy invariance of $\mcR_\infty$-chain maps}
The content of this short note is to prove the following lemma.

\begin{lemma}
Let $g_0\colon M\to N$ be the bottom part of an $\mcR_\infty$-chain map $g_*$ of degree $d$ and let $g_0$ be homotopic to $f_0$. Then, one can extend $f_0$ to an $\mcR_\infty$-chain map $f_*$.
\end{lemma}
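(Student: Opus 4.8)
The plan is to reduce the statement to solving, inductively on $\ell$, the system of equations \eqref{e:G_map} with $f'_\ell = 0$, using the acyclicity arguments already established in the paper. Given the homotopy $h_0$ from $g_0$ to $f_0$, i.e.\ $[\partial, h_0] = f_0 - g_0$, I would look for the $\mcR_\infty$-chain map $f_*$ together with an $\mcR_\infty$-homotopy $h_*$ from $g_*$ to $f_*$ simultaneously; that is, I would prove the stronger statement that $f_0$ extends to an $\mcR_\infty$-chain map which is moreover $\mcR_\infty$-homotopic to $g_*$. This is the natural formulation because the equations for the components $h_\ell$ of the homotopy feed into the equations for $f_\ell$, and solving them together is no harder than solving the ones for $f_\ell$ alone.

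The key steps, in order: First, set up the recursion. Suppose $f_0, \ldots, f_{\ell-1}$ and $h_0, \ldots, h_{\ell-1}$ have been constructed so that the equations \eqref{e:G_map} hold for $g_*$ translated into the homotopy equations $[\partial, h_k]\,|r_1|\cdots|r_k) = (f_k - g_k)|r_1|\cdots|r_k) + (\text{lower-order correction terms built from } f_{<k}, g_{<k}, h_{<k})$ for all $k < \ell$. Second, form the would-be defining relation for $h_\ell$: the right-hand side at level $\ell$ is a known map $(s\mcR)^{\otimes\ell}\to\Hom(M,N)$ assembled from the already-constructed data (the $g_\ell$ term, plus composition terms $(r_1,\ldots,r_k)\circ h_{\ell-k}$, $h_k\circ(r_{k+1},\ldots,r_\ell)$, and the $\partial^+$-collapse terms $h_{\ell-1}|\cdots|r_kr_{k+1}|\cdots)$), and I need $h_\ell$ with $[\partial, h_\ell] = (\text{that map}) + f_\ell - g_\ell$, where $f_\ell$ is still free to choose. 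Third, check that the prescribed right-hand side, call it $w_\ell$, is a $\partial$-cycle in the relevant $\Hom$-complex — this is forced by the fact that the levels below already satisfy their equations, exactly as in the proofs of Lemma~\ref{l:differential_Sh} and the transfer computations, where applying $[\partial,-]$ to the inductive formula and using the lower relations kills everything. Fourth, invoke projectivity/acyclicity: since $M$ is free as a graded $\mcR$-module and $N$ has the same homology as $M$ through $g_0$ (more to the point, $\Hom((s\mcR)^{\otimes\ell}\otimes M, N)$ with $g_0$ a chain equivalence onto... — actually the cleanest route is that $\Hom(M,N)$ has the homology of $\Hom(M,M)$ via $g_0$, and $[\partial,-]$ on $w_\ell$ vanishing together with $w_\ell$ being a boundary up to the adjustable term $f_\ell$), one can solve: take $f_\ell$ to be a cycle in the same homology class as $-w_\ell$ — for instance, if $w_\ell$ is already a boundary, set $f_\ell = 0$ and let $h_\ell$ be a primitive; in general choose $f_\ell$ to absorb the homology class, which is possible because $g_*$ being an $\mcR_\infty$-chain map means the analogous class vanishes and $g_0 \simeq f_0$ transports this.

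I would then be careful to package the "solve $[\partial, h_\ell] = w_\ell + f_\ell - g_\ell$" step via a fixed contracting homotopy: because $g_0$ is a chain homotopy equivalence (it has a homotopy inverse, supplied by or constructible from the data — here one should note the lemma as stated only gives $g_0 \simeq f_0$, so one works with $\Hom((s\mcR)^{\otimes\ell}, \Hom(M,N))$ and uses that composition with $g_0$ and with a chosen homotopy inverse, plus $h_0$, assembles the needed primitives), the mapping $f_\ell \mapsto [\partial, h_\ell]$ can be inverted on cycles. Concretely, I would run the same style of argument as in the Proposition on $\ker\alpha$ being acyclic: peel off the top tensor degree, use a contraction coming from $g_0$'s homotopy data, and descend. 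The main obstacle I anticipate is bookkeeping: verifying that $w_\ell$ is genuinely a cycle (the sign-laden cancellation using all lower equations simultaneously for both $f$ and $h$), and correctly identifying which homology group the obstruction lives in so that one can see it is killed by the hypothesis that $g_*$ exists and $f_0 \simeq g_0$. No single step is deep — it is the coherence of the inductive hypothesis (tracking $f_{<\ell}$, $g_{<\ell}$, and $h_{<\ell}$ at once, with the shuffle/composition terms and the $\mcR$-linearity built into the bar-construction form of $\mcR_\infty$-maps) that requires care, and that is where I would spend the bulk of the write-up.
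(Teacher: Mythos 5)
Your overall plan (induct on $\ell$, building the components $f_\ell$ together with the components $h_\ell$ of an $\mcR_\infty$-homotopy from $g_*$ to $f_*$) points in a workable direction, but as written it contains two genuine problems. First, the acyclicity input you intend to invoke is not available: the lemma makes no freeness or projectivity assumption on $M$ or $N$, and it only assumes $g_0$ is \emph{homotopic} to $f_0$, not that $g_0$ is a homotopy equivalence. So there is no contraction ``coming from $g_0$'s homotopy data,'' no identification of the homology of $\Hom(M,N)$ with that of $\Hom(M,M)$, and nothing for an argument modelled on the acyclicity-of-$\ker\alpha$ proposition to run on. Second, and more fundamentally, you have cast an unobstructed problem as an obstruction problem. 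The equations \eqref{e:G_map} are linear in the collection $\{f_\ell\}$ (every term contains exactly one component, composed with the fixed structure maps $(r_i,\ldots,r_j)$ of $M$ and $N$), and for \emph{any} $\mcR_\infty$-map $h$ of degree $d+1$ --- the components $h_\ell$ may be chosen completely arbitrarily --- the differential $[\partial,h]$ is automatically an $\mcR_\infty$-chain map, since $[\partial,[\partial,h]]=0$. Hence once you decide to look for $f_*$ of the form $g_*+[\partial,h_*]$, there is nothing left to solve: take $h_0$ to be the given homotopy, $h_\ell=0$ for $\ell\geq 1$, and \emph{define} $f_\ell$ as the $\ell$-th component of $g_*+[\partial,h_*]$, which the Proposition of Section~\ref{s:R_linear_maps} computes explicitly; the bottom component is $g_0+[\partial,h_0]=f_0$ as required. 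There are no classes $w_\ell$ whose vanishing needs to be checked, and your requirement that $f_\ell$ be a $[\partial,-]$-cycle is not what \eqref{e:G_map} demands --- the equations relate $[\partial,f_\ell]$ to the lower components, not to zero --- so imposing it would in general make the system unsolvable.

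For comparison, the paper's proof is exactly this collapsed argument: by linearity of \eqref{e:G_map} one reduces to extending the null-homotopic map $f_0-g_0=[\partial,\eta]$, writes down the explicit extension $f_\ell|r_1|\cdots|r_\ell)=\sign{d+1}[\eta,(r_1,\ldots,r_\ell)]$ (which is, up to sign bookkeeping, the $\ell$-th component of $[\partial,h]$ for $h$ the extension of $\eta$ by zero), and verifies \eqref{e:G_map} by a short Leibniz-rule computation. If you reorganize your write-up around this observation, the coherence bookkeeping you expected to dominate the proof disappears entirely.
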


\begin{proof}
By the additivity of the equations \eqref{e:G_map}, it is enough to extend any null-homotopic $f_0=[\partial,\eta]$ to an $\mcR_\infty$-chain map $f_*$. We set
\[f_\ell|r_1|\cdots|r_\ell)=\sign{d+1}[\eta,(r_1,\ldots,r_\ell)].\]
Then by the graded Leibniz rule,
\begin{align*}
[\partial,f_\ell]|r_1|\cdots|r_\ell) & =\sign{d+1}[\partial,[\eta,(r_1,\ldots,r_\ell)]]-\sign{d}f_\ell\partial^\otimes|r_1|\cdots|r_\ell) \\
& =\sign{d+1}[[\partial,\eta],(r_1,\ldots,r_\ell)]+[\eta,\partial(r_1,\ldots,r_\ell)]-[\eta,\partial^\otimes(r_1,\ldots,r_\ell)] \\
& =\sign{d+1}[f_0,(r_1,\ldots,r_\ell)]+[\eta,\partial^\mathrm{alg}(r_1,\ldots,r_\ell)].
\end{align*}
The first term equals $\sign{1+d}f_0(r_1,\ldots,r_\ell)+\sign{d(\ell+\degree r1\ell)}(r_1,\ldots,r_\ell)f_0$ and the second term, by the definition and the graded Leibniz rule again, equals
\begin{align*}
& \phantom{{}={}} \sum_{k=1}^{\ell-1}\sign{k-1+\degree r1k}[\eta,(r_1,\ldots,r_kr_{k+1},\ldots,r_\ell)] \\
& \ \ \ +\sum_{k=1}^{\ell-1}\sign{k+\degree r1k}[\eta,(r_1,\ldots,r_k)(r_{k+1},\ldots,r_\ell)] \\
& =\sum_{k=1}^{\ell-1}\sign{k+d+\degree r1k}f_{\ell-1}|r_1|\cdots|r_kr_{k+1}|\cdots|r_\ell) \\
& \ \ \ +\sum_{k=1}^{\ell-1}\sign{k+1+d+\degree r1k}f_k|r_1|\cdots|r_k)(r_{k+1},\ldots,r_\ell) \\
& \ \ \ +\sum_{k=1}^{\ell-1}\sign{d(k+\degree r1k)}(r_1,\ldots,r_k)f_{\ell-k}|r_{k+1}|\cdots|r_\ell).
\end{align*}
Consequently, the $f_\ell$ satisfy the equations \eqref{e:G_map} with $f'_\ell=0$ and thus prescribe an $\mcR_\infty$-chain map of degree $d$.
\end{proof}

\subsection*{A note on dg-categories}
The above works for any locally free dg-category instead of a dga $\mcR$. The definition of an $\mcR_\infty$-map is a more economic version of an $\mcC_\infty$-module for the dg-category
\[\mcC=\mcR\otimes(\xymatrix@1{\bullet\ar[r]^{\bbZ f}&\bullet})\]
describing $\mcR$-linear maps: it consists of two objects with endomorphisms forming $\mcR$ and a map $f$ of degree $d$ between them respecting this action. The corresponding components of an $\mcR_\infty$-map are as follows
\[f_\ell|r_1|\cdots|r_\ell)=\sum_{k=0}^\ell \sign{(d+1)(k+\degree r1k)}(r_1,\ldots,r_k,f,r_{k+1},\ldots,r_\ell).\]

\end{document}